\theoremstyle{plain}
\newtheorem{theorem}{Theorem}[section]
\newtheorem{prop}[theorem]{Proposition}
\newtheorem{corollary}[theorem]{Corollary}
\newtheorem{lemma}[theorem]{Lemma}
\newtheorem{theorem*}{Theorem}
\theoremstyle{definition}
\newtheorem{remark}[theorem]{Remark}
\numberwithin{equation}{section}
\def\ra{\rightarrow}
\newcommand\arr{\ifinner\to\else\longrightarrow\fi}
\renewcommand{\setminus}{\smallsetminus}
\newcommand{\Sym}{\operatorname{Sym}}
\newcommand{\Aut}{\operatorname{Aut}}
\newcommand{\Spec}{\operatorname{Spec}}
\newcommand{\SL}{\operatorname{SL}}
\DeclareMathOperator\spec{Spec}
\newcommand{\gitq}{/\hspace{-0.25pc}/}
\newcommand\Mg[1]{\overline{\mathcal{M}}_{#1}}
\newcommand\M{\overline{M}}
\def\co{\colon\thinspace} 
\def\co{\colon\thinspace} 
\DeclareMathOperator{\rank}{rank}
\DeclareMathOperator{\lspan}{span}
\def\Hn1{\mathcal{H}_{n,1}}
\def\B{\mathcal{B}}
\def\O{\mathcal{O}}
\def\M{\overline{M}}
\def\I{\mathcal{I}}
\def\ZZ{\mathbb{Z}}
\def\PP{\mathbb{P}}
\def\ZZ{\mathbb{Z}}
\def\CC{\mathbb{C}}
\def\GG{\mathbb{G}}
\def\HH{\mathrm{H}}
\newcommand{\gm}{\mathbb{G}_m}
\newcommand*{\longhookrightarrow}{\ensuremath{\lhook\joinrel\relbar\joinrel\relbar\joinrel\relbar\joinrel\relbar\joinrel\relbar\joinrel\relbar\joinrel\relbar\joinrel\rightarrow}}
\begin{document}
\title[Finite Hilbert stability of curves]{Finite Hilbert stability of canonical curves, II. The even-genus case}
\author[Alper]{Jarod Alper}
\author[Fedorchuk]{Maksym Fedorchuk}
\author[Smyth]{David Ishii Smyth*}

\address[Alper]{Departamento de Matem\'aticas\\
Universidad de los Andes\\
Cra 1 No. 18A-10\\
Edificio H\\
Bogot\'a, 111711, Colombia} \email{jarod@uniandes.edu.co}

\address[Fedorchuk]{Department of Mathematics\\
Columbia University\\
2990 Broadway\\
New York, NY 10027}
\email{mfedorch@math.columbia.edu}

\address[Smyth]{Department of Mathematics\\
Harvard University\\
1 Oxford Street\\
Cambridge, MA 01238}
\email{dsmyth@math.harvard.edu}
\thanks{*The third author was partially supported by NSF grant 
DMS-0901095 during the preparation of this work.}
\maketitle

\begin{abstract}
We prove that a generic canonically embedded curve of even genus has semistable $m^{th}$ Hilbert point for all $m \geq 2$. 
More precisely, we prove that a generic canonically embedded trigonal curve of even genus has semistable $m^{th}$ Hilbert point for all $m \geq 2$. 
Furthermore, we show that the analogous result fails for bielliptic curves. 
Namely, the Hilbert points of bielliptic curves are asymptotically semistable but become non-semistable below a definite threshold value depending on $g$.
\end{abstract}

\maketitle

\setcounter{tocdepth}{1}
\tableofcontents

\section{Introduction}

This paper is a sequel to \cite{AFS-odd-stability}, where we proved that a general smooth curve of odd genus, canonically or bicanonically embedded, 
has semistable $m^{th}$ Hilbert point for all $m\geq 2$. Here, we prove an analogous result for canonically embedded curves of even genus. Our main result
is the following. 
\begin{theorem}[Main Result]\label{T:main-canonical} 
Suppose $C \subset \PP \HH^0(C, K_{C})$ is a general smooth curve of even genus, 
embedded by the complete linear system $|K_{C}|$. Then the $m^{th}$ Hilbert point of $C$ is semistable for every $m \geq 2$.
\end{theorem}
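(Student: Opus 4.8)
The plan is to reduce semistability of the $m^{th}$ Hilbert point of a general canonical curve to a statement about a single, sufficiently degenerate curve, and then to verify that statement by an explicit one-parameter-subgroup analysis. Since semistability is an open condition on the Hilbert scheme, it suffices to exhibit one canonically embedded curve $C_0$ of each even genus $g$ whose $m^{th}$ Hilbert point is semistable for all $m \geq 2$, and which lies in the closure of the locus of canonically embedded smooth curves. Following the philosophy of the odd-genus paper \cite{AFS-odd-stability}, I would choose $C_0$ to be a \emph{general trigonal curve} of genus $g$ (as the abstract already advertises). The upshot of this reduction is that the main theorem follows from the stronger trigonal statement, so the real work is to prove semistability for a general canonically embedded trigonal curve of even genus.

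The key technical input is the Hilbert--Mumford numerical criterion: the $m^{th}$ Hilbert point of $C_0 \subset \PP\HH^0(C_0, K_{C_0})$ is semistable if and only if for every one-parameter subgroup $\lambda$ of $\SL(\HH^0(C_0,K_{C_0}))$ the associated Hilbert--Mumford index is $\geq 0$. To make this tractable I would exploit the geometry of the canonical embedding of a trigonal curve: by the geometric version of the Max Noether / Petri theorem, a trigonal canonical curve lies on a two-dimensional rational normal scroll $S$, and its equations are governed by this scroll. I would therefore pass to a maximally degenerate trigonal curve — a \dac{} or the analogous balanced configuration living on the scroll — for which the defining equations are monomial or nearly monomial, so that the relevant graded pieces of the homogeneous ideal have monomial bases adapted to a torus action. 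One then only needs to test \emph{diagonal} one-parameter subgroups (worst-case destabilizing $\lambda$ can be conjugated to a maximal torus), reducing the problem to a combinatorial estimate: for each weight vector one must bound below the sum of the $\lambda$-weights on a basis of the degree-$m$ part of the ideal, equivalently bound above the weights on the degree-$m$ part of the homogeneous coordinate ring.

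The central estimate, then, is to show that for the chosen monomial degeneration and every normalized weight vector, the total weight on any monomial basis of $(S/I_{C_0})_m$ does not exceed the "expected" value $\frac{m \deg C_0 + 1 - g}{g}\bigl(\text{total weight}\bigr)$ dictated by the $\SL$-normalization — i.e., that the relevant state polytope contains the center of mass. I would organize this by first handling $m=2$ (where the ideal is generated by quadrics on the scroll plus the scroll equations, and the estimate can be done by hand) and then bootstrapping to $m \geq 3$ using the surjectivity of multiplication maps $(S/I)_2 \otimes (S/I)_{m-1} \to (S/I)_m$ and the fact that monomials in the coordinate ring of a balanced scroll are uniformly distributed — the higher-$m$ estimates follow from the $m=2$ and $m=3$ cases by a convexity/averaging argument of the type used in \cite{AFS-odd-stability}.

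The main obstacle I anticipate is the parity: in the odd-genus case the balanced scroll has a symmetric "type" $(k,k)$ or the relevant combinatorics are symmetric, whereas for even $g$ the minimal scroll is of type $(a,b)$ with $a \neq b$ forced, and the trigonal curve sits asymmetrically on it. This asymmetry breaks the clean symmetry that makes the weight estimates transparent, so the combinatorial heart of the proof — showing the worst-case destabilizing one-parameter subgroup still yields a nonnegative Hilbert--Mumford index — will require a more delicate, possibly case-by-case analysis of weight vectors supported on the two rulings of the scroll, together with a careful choice of the degenerate curve $C_0$ (balancing the branch points of the $g^1_3$) to neutralize the asymmetry as much as possible. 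A secondary subtlety is ensuring the chosen degeneration remains \emph{smoothable to a canonical curve} and has the expected Hilbert function, so that the openness argument applies with the correct numerical invariants.
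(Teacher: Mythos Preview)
Your overall architecture---degenerate to a singular trigonal curve with a torus action, check semistability there, conclude by openness---matches the paper. But there is a genuine gap in the reduction to diagonal one-parameter subgroups. Your parenthetical justification, that ``worst-case destabilizing $\lambda$ can be conjugated to a maximal torus,'' is false as stated: conjugating the one-parameter subgroup also conjugates the Hilbert point, so this buys you nothing. What actually licenses the reduction is the Kempf--Morrison criterion (Proposition~2.3 of \cite{AFS-odd-stability}): because the balanced \dac\ carries a $\GG_m\subset\Aut(C)$ making $\HH^0(C,\omega_C)$ a \emph{multiplicity-free} representation, any destabilizing one-parameter subgroup can be replaced by one commuting with this $\GG_m$, hence diagonal in the eigenbasis $\{x_i,y_i\}$. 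You never isolate this property, and without it the combinatorial problem you set up is not equivalent to semistability. This is also why the paper chooses the \emph{balanced} \dac\ specifically (trivial crimping): it is the unique curve in its family with a $\GG_m$-action.

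Two smaller points. First, your anticipated obstacle---an asymmetric scroll for even $g$---does not arise: the balanced \dac\ of genus $2k$ sits on $\PP^1\times\PP^1$ embedded by $|\O(1,k-1)|$, which is as symmetric as one could hope. Second, your proposed combinatorics (handle $m=2$, then bootstrap via surjectivity of multiplication maps and convexity) is not how the paper proceeds and is not obviously adequate. The paper instead decomposes $\HH^0(C,\omega_C^m)$ into three blocks $\lspan\{\omega_i\}\oplus\lspan\{\eta_i\}\oplus\lspan\{\chi_i\}$ corresponding to the restrictions to $C_0$, $C_2$, and the ``mixed'' part supported on $C_1$; the first two are handled for free by Kempf's stability of rational normal curves, and the entire content is an explicit construction of monomial $\chi$-bases whose $\rho$-weights sum to controlled multiples of $\lambda_k+\nu_k$. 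Your bootstrap sketch does not locate this structure and would likely founder on the same asymmetry you (incorrectly) attribute to the scroll.
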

We refer to our previous paper \cite{AFS-odd-stability} for an extended discussion of the geometric motivation behind this result and its applications
 to the Hassett-Keel log minimal model program for $\M_g$, as well as
an informal description of the method of proof. As in \cite{AFS-odd-stability}, this generic stability result is obtained by proving that a very 
special singular curve has semistable Hilbert points. The singular curve we used in \cite{AFS-odd-stability} was a balanced canonical ribbon of odd genus.
The singular curve that we will use here is the so called 
{\em balanced double $A_{2k+1}$-curve} of even genus.
 
A {\em double $A_{2k+1}$-curve} is any curve obtained by gluing three copies of $\PP^1$ along two
$A_{2k+1}$ singularities (see Figure \ref{F:double-A-curve}). 
In every even genus $g=2k$, double $A_{2k+1}$-curves have 
$2k-4$ moduli corresponding to the crimping of the $A_{2k+1}$-singularities, i.e., deformations that preserve the analytic type
of the singularities as well as the normalization of the curve;  we refer to \cite{fred} for a comprehensive 
discussion of crimping of curve singularities. 
We note that the parameter space of crimping for an $A_{2k+1}$-singularity with automorphism-free branches 
has dimension $k$. However, in the case of a double $A_{2k+1}$-curve, the presence of automorphisms of the pointed $\PP^1$'s 
reduces the dimension of crimping moduli by $4$.

Among double $A_{2k+1}$-curves, there is a unique double $A_{2k+1}$-curve with a $\gm$-action, 
corresponding to the trivial choice of crimping data. We call 
this curve the {\em balanced double $A_{2k+1}$-curve}.  
Our motivation for considering double $A_{2k+1}$-curves comes from 
the Hassett-Keel program for $\M_{2k}$, where we expect the $2k-4$ dimensional locus 
of double $A_{2k+1}$-curves to replace the 
locus in the boundary divisor $\Delta_{k}\subset \M_{2k}$ consisting 
of curves $C_1\cup C_2$ such that each $C_i$ is a hyperelliptic curve of genus $k$. 
Indeed, this prediction has already been verified in $g=4$ by the second author who showed that 
the divisor $\Delta_2\subset \M_4$ is contracted to the point corresponding to the unique genus $4$
double $A_5$-curve in the final non-trivial log canonical model of $\M_4$ \cite{fedorchuk-genus-4}.

It is not too difficult to see that the balanced double $A_{2k+1}$-curve is trigonal,
 i.e., it lies in the closure of the locus of canonically embedded 
smooth trigonal curves; see Proposition \ref{P:A-curve}. 
From this observation, we obtain a slight strengthening of our Main Result:
\begin{theorem}[Stability of trigonal curves]\label{T:main-trigonal} 
Suppose $C \subset \PP \HH^0(C, K_{C})$ is a general smooth trigonal curve of even genus, 
embedded by the complete linear system $|K_{C}|$. Then the $m^{th}$ Hilbert point of $C$ is semistable for every $m \geq 2$.
\end{theorem}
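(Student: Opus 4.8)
The plan is to deduce Theorem~\ref{T:main-trigonal} from the analysis behind Theorem~\ref{T:main-canonical} rather than re-proving semistability from scratch. The key observation is that by Proposition~\ref{P:A-curve}, the balanced double $A_{2k+1}$-curve $C_0$ of genus $2k$ is trigonal, i.e., it lies in the closure $\overline{\mathcal{T}}_{2k}$ of the locus of canonically embedded smooth trigonal curves inside the Hilbert scheme. Thus the deformation argument used for the Main Result already produces a trigonal degeneration, and all we need is to track trigonality through the openness/genericity step. So the first step is to recall the structure of the proof of Theorem~\ref{T:main-canonical}: one shows that the $m^{th}$ Hilbert point $[C_0]_m$ is semistable (for all $m\geq 2$) by exhibiting an appropriate one-parameter-subgroup/worst-destabilizing-flag estimate, and then invokes openness of the semistable locus in the Hilbert scheme to conclude that a general deformation of $C_0$ has semistable $m^{th}$ Hilbert point as well.

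Second, I would make precise the claim that \emph{a general trigonal curve} (as opposed to merely \emph{some} curve near $C_0$) is semistable. The semistable locus $\Hilb^{\ss}_m$ is open and $\PGL$-invariant in the Hilbert scheme $\Hilb$ of canonically embedded genus-$2k$ curves; intersecting with the irreducible locally closed locus $\mathcal{T}_{2k}\subset \Hilb$ of canonically embedded \emph{smooth} trigonal curves (which is irreducible, e.g.\ because the Maroni strata are, and a general trigonal curve has balanced Maroni invariant), we get an open subset $\mathcal{T}_{2k}\cap \Hilb^{\ss}_m$. The point is that this open subset is nonempty: the balanced double $A_{2k+1}$-curve $C_0$ lies in $\overline{\mathcal{T}_{2k}}\cap \Hilb^{\ss}_m$, and since $\Hilb^{\ss}_m$ is open, it contains a curve $C_1\in\mathcal{T}_{2k}$ near $C_0$; equivalently, a general smoothing of $C_0$ inside the trigonal locus is semistable. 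Because $\mathcal{T}_{2k}$ is irreducible, a nonempty open subset is dense, so a \emph{general} smooth trigonal curve of genus $2k$ has semistable $m^{th}$ Hilbert point. Finally, one intersects over the finitely many relevant values of $m$ (or notes the uniform bound from the explicit estimate) to get semistability simultaneously for all $m\geq 2$, and invokes $\PGL$-invariance so that the statement holds for the abstract trigonal curve independently of the choice of basis of $\HH^0(C,K_C)$.

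The technical heart that must be addressed carefully is the passage ``$C_0\in\overline{\mathcal{T}_{2k}}$ and $C_0$ semistable $\Rightarrow$ a general trigonal curve is semistable.'' This requires knowing that $C_0$ is actually a limit of \emph{smooth} trigonal curves and not merely a point in some pathological component of the Hilbert scheme containing non-reduced or reducible non-canonical schemes; this is exactly the content of Proposition~\ref{P:A-curve}, which I would cite for the existence of a family over a pointed curve (or the spectrum of a DVR) with general fiber a smooth trigonal canonical curve and special fiber $C_0$. Given such a family $\pi\colon \mathcal{C}\to B$ with $\mathcal{C}\hookrightarrow \PP^{2k-1}_B$ relatively canonically embedded, the induced map $B\to \Hilb$ sends the closed point to $[C_0]_m$; by openness of semistability, after shrinking $B$ the whole image lies in $\Hilb^{\ss}_m$, so the general fiber is a smooth trigonal curve with semistable $m^{th}$ Hilbert point. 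Then irreducibility of $\mathcal{T}_{2k}$ upgrades ``the general fiber of this one family'' to ``a general trigonal curve.'' I expect the only real subtlety to be bookkeeping: ensuring the semistability estimate for $[C_0]_m$ is genuinely uniform in $m$ (so that a single general trigonal curve works for all $m\geq 2$ at once, matching the phrasing of the theorem), which should follow from the same GIT computation that powers Theorem~\ref{T:main-canonical}.
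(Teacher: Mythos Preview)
Your proposal is correct and follows essentially the same approach as the paper: establish semistability of the balanced double $A_{2k+1}$-curve $C_0$ for all $m\geq 2$ (the content of Theorem~\ref{T:ribbon}), observe via Proposition~\ref{P:A-curve} that $C_0$ is a degeneration of smooth trigonal canonical curves, and conclude by openness of the semistable locus.

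The only difference is one of concreteness. The paper produces the trigonal smoothing explicitly: since $C_0$ sits as a $(3,k+1)$-curve on the balanced scroll $\PP^1\times\PP^1\hookrightarrow\PP^{2k-1}$, one simply deforms $C_0$ inside the linear system $|\mathcal{O}_{\PP^1\times\PP^1}(3,k+1)|$ to a smooth member, which is then automatically a canonically embedded smooth trigonal curve. You instead invoke Proposition~\ref{P:A-curve} more abstractly as asserting $C_0\in\overline{\mathcal{T}_{2k}}$ and then appeal to irreducibility of $\mathcal{T}_{2k}$ to pass from ``some nearby trigonal curve'' to ``a general trigonal curve.'' Both are fine; the paper's version has the advantage of making the smoothing completely explicit and bypassing any appeal to irreducibility of the trigonal locus. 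Your additional care about uniformity in $m$ is reasonable but not strictly needed for the statement as phrased (and in any case the countable intersection of dense opens is dense over $\CC$).
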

This result leads to two related questions: 
Is it true that all smooth trigonal curves have semistable $m^{th}$ Hilbert points for all $m \geq 2$? 
Similarly, do other curves with low Clifford index have this property? 
Surprisingly, the answer to both questions is no. It is not too difficult to see that the $2^{nd}$ Hilbert point 
of a trigonal curve with a positive Maroni invariant is non-semistable; see \cite{fedorchuk-jensen} for a quick proof. In the 
final section of this paper, we will present a heuristic which suggests that a smooth trigonal curve has a semistable $m^{th}$ Hilbert point 
for $m\geq 3$. We also prove that the $m^{th}$ Hilbert point of a 
smooth bielliptic curve becomes non-semistable below 
a certain definite threshold value of $m$, depending on $g$. This is complemented by a proof of the semistability of a generic bielliptic curve of odd genus
for large values of $m$.

The outline of this paper is as follows. In Section \ref{S:A-curves}, 
we prove the basic facts about the balanced double $A_{2k+1}$-curve necessary to prove 
semistability by the strategy described in \cite{AFS-odd-stability}. In Section \ref{S:bases}, 
we construct the monomial bases necessary to prove 
semistability of the Hilbert points of the balanced double $A_{2k+1}$-curve. As a result, we obtain 
a proof of Theorems \ref{T:main-canonical} and \ref{T:main-trigonal}; see Corollary \ref{C:main-trigonal}. In Section \ref{S:bielliptic}, 
we discuss finite Hilbert stability of trigonal curves with a positive 
Maroni invariant and bielliptic curves.

We work over the field of complex numbers $\CC$.

\section{The balanced double $A_{2k+1}$-curve}
\label{S:A-curves}
In this section, we give an explicit description of the pluricanonical linear system
 $\HH^0(C, \omega_C^m)$ of the balanced double 
$A_{2k+1}$-curve $C$. In addition, we prove the key fact 
that $\HH^0(C, \omega_C)$ is a multiplicity-free representation of $\Aut(C)$. 
Following the strategy of \cite{AFS-odd-stability}, this allows us to prove the semistability of the
 $m^{th}$ Hilbert point of $C$ by writing down monomial bases for $\HH^0(C, \omega_C^{m})$. 
In Section \ref{S:bases}, we construct the requisite monomial bases and thus prove the semistability of 
the Hilbert points of $C$.

Let us begin by giving a precise description of the balanced double $A_{2k+1}$-curve. Let $C_0, C_1, C_2$ denote three copies of 
$\PP^1$, and label the 
uniformizers at 0 (resp., at $\infty$) by $s_0, s_1,s_2$ (resp., by $t_0,t_1, t_2$). Fix an integer $k\geq 2$, and let $C$ be the arithmetic genus $g=2k$ curve obtained by gluing three $\PP^1$'s along two
$A_{2k+1}$ singularities with trivial crimping. More precisely, we impose an $A_{2k+1}$ singularity at 
$(\infty \in C_0) \sim (0 \in C_1)$ by 
gluing $C_0\setminus 0$ and $C_1\setminus \infty$ into an affine singular curve
\begin{equation}\label{E:gluing-1}
\spec \CC[x,y]/(y^2-x^{2k+2})\simeq \spec \CC[(t_0,s_1), (t_0^{k+1}, -s_1^{k+1})].
\end{equation}
Similarly, we impose an $A_{2k+1}$ singularity at $(\infty \in C_1) \sim (0 \in C_2)$
by gluing $C_1\setminus 0$ and $C_2\setminus \infty$ into 
\begin{equation}\label{E:gluing-2}
\spec \CC[x,y]/(y^2-x^{2k+2}) \simeq \spec \CC[(t_1,s_2), (t_1^{k+1}, -s_2^{k+1})].
\end{equation}

\begin{figure}[hbt]
\begin{centering}
\begin{tikzpicture}[scale=2]
		\node  (0) at (-3.5, 3) {};
		\node  (1) at (-2.75, 3) {};
		\node  (2) at (-2.25, 3) {};
		\node  (3) at (-1.75, 2.5) {$A_{2k+1}$};
		\node  (4) at (-3.5, 2.25) {};
		\node  (4x) at (-4, 2.25) {$A_{2k+1}$};
		\node  (5) at (-2.25, 2.25) {};
		\node  (6) at (-3, 2) {};
		\node  (7) at (-4.25, 1.5) {};
		\node  (8) at (-3.5, 1.5) {};
		\node  (8c) at (-3.7, 1.5) {$C_0$};
		\node  (9) at (-3, 1.5) {};
		\node  (10) at (-2.25, 1.5) {};
		\node  (10c) at (-2.05, 1.5) {$C_2$};
		\node  (C2) at (-2.85, 2.7) {$C_1$};
		\draw [very thick, bend left=45] (1.center) to (5.center);
		\draw [very thick] (2.center) to (10.center);
		\draw [very thick,bend left=300, looseness=1.50] (4.center) to (6.center);
		\draw [very thick] (0.center) to (8.center);
		\draw [very thick,bend left=45] (4.center) to (1.center);
		\draw [very thick,bend right=315, looseness=0.75] (5.center) to (9.center);
\end{tikzpicture}\end{centering}
\vspace{-0.5pc}
\caption{Double $A_{2k+1}$-curves}\label{F:double-A-curve}
\end{figure}
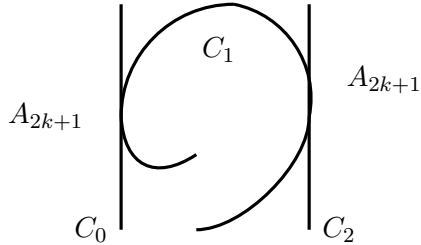

 The automorphism 
group of $C$ is given by $\Aut(C) = \GG_m \rtimes \ZZ_2$ where 
$\ZZ_2$ acts via $s_i \leftrightarrow t_{2-i}$ and 
$\GG_m = \Spec \CC[\lambda, \lambda^{-1}]$ acts via 
\begin{align*}
\lambda\cdot s_0 &= \lambda s_0, \\
\lambda\cdot s_1 &= \lambda^{-1}s_1, \\
\lambda\cdot s_2 &= \lambda s_2.
\end{align*}

Using the description of the dualizing sheaf on a singular curve as in \cite[Ch.IV]{serre-corps} or \cite[Ch.II.6]{barth},
we can write down a basis of $\HH^0(C, \omega_C)$ as follows:
\begin{equation}\label{E:basis}
\begin{aligned}
x_1 &= \left(ds_0, \frac{ds_1}{s_1^2}, 0 \right)	& y_1 &=  \left(0, ds_1, \frac{ds_2}{s_2^2} \right)  \\
x_2 &= \left(s_0 ds_0, \frac{ds_1}{s_1^3}, 0 \right)& y_2 &= \left(0, s_1ds_1, \frac{ds_2}{s_2^3} \right)  \\
	& \vdots			& & \vdots \\
x_{k} &=  \left(s_0^{k-1} ds_0, \frac{ds_1}{s_1^{k+1}}, 0 \right)& y_{k} &= \left(0, s_1^{k-1}ds_1, \frac{ds_2}{s_2^{k+1}} \right)
\end{aligned}
\end{equation}

It is straightforward to generalize this description to the spaces of pluricanonical differentials.
\begin{lemma} 
\label{L:pluricanonical-bases}
For $m\geq 2$, the product map $\Sym^m \HH^0(C,\omega_C)\ra \HH^0(C, \omega_C^m)$ is surjective and
a basis of $\HH^0(C, \omega_C^m)$ consists of the following $(2m-1)(2k-1)$ differentials:
\begin{align*}
\omega_0 &= \left((ds_0)^m, \frac{(ds_1)^m}{s_1^{2m}}, 0 \right)	&  \eta_0 &= \left(0, (ds_1)^m, \frac{(ds_2)^m}{s_2^{2m}} \right)  \\
\omega_1 &= \left(s_0 (ds_0)^{m}, \frac{(ds_1)^m}{s_1^{2m-1}}, 0 \right)& \eta_1 &= \left(0, s_1(ds_1)^m, \frac{(ds_2)^{m}}{s_2^{2m+1}} \right)  \\
	 \vdots			& &  \vdots \\
\omega_{m(k-1)} &= \left(s_0^{m(k-1)} (ds_0)^m, \frac{(ds_1)^m}{s_1^{m(k+1)}}, 0 \right) & \eta_{m(k-1)}&= \left(0, s_1^{m(k-1)}(ds_1)^m, \frac{(ds_2)^m}{s_2^{m(k+1)}} \right)
\end{align*}
and
\begin{align*}
\chi_{-k(m-1)+1} &= \left(0, s_1^{k(m-1)-m-1} (ds_1)^m, 0 \right) \\
&\vdots \\
\chi_{i} &= \left(0, s_1^{-i-m} (ds_1)^m, 0 \right) \\
& \vdots \\
\chi_{k(m-1)-1} & = \left(0, \frac{(ds_1)^m}{s_1^{(m-1)(k+1)}}, 0 \right)
\end{align*}
\end{lemma}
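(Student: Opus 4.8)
The plan is to make the phrase ``generalize this description'' explicit: present $\HH^0(C,\omega_C^m)$ as the space of tuples of rational $m$-canonical differentials on the normalization $C_0\sqcup C_1\sqcup C_2$ cut out by pole-order bounds and linear matching conditions at the two $A_{2k+1}$-points, read off a basis, and check that every basis element is a degree-$m$ monomial in the differentials of \eqref{E:basis}. The first step is to pin down the local picture. Since the singularity $y^2=x^{2k+2}$ is a planar hypersurface, $C$ is Gorenstein, $\omega_C$ is invertible, $\omega_C^m$ makes sense, and every global section of $\omega_C^m$ is regular on the smooth locus. Using Rosenlicht's description of the dualizing sheaf (as in the references already cited), at the node $(\infty\in C_0)\sim(0\in C_1)$ the stalk of $\omega_C$ is free of rank one, generated by the germ of $x_k$ --- the differential with a single pole of exact order $k+1$ on each branch --- and likewise $y_k$ generates $\omega_C$ at $(\infty\in C_1)\sim(0\in C_2)$. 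Writing a local section of $\omega_C$ as $f$ times such a generator with $f$ in the local ring $R$, and identifying $R$ inside the product $\CC[[u]]\times\CC[[v]]$ of the two branch rings as the pairs whose first $k+1$ Taylor coefficients agree, one sees that $\omega_C$ consists of pairs of differentials on the two branches, each with a pole of order at most $k+1$, whose $k+1$ deepest-pole Laurent coefficients agree up to sign. Passing to $m$-th tensor powers, a tuple $(\sigma_0,\sigma_1,\sigma_2)$ of rational $m$-canonical differentials on the three copies of $\PP^1$ represents a section of $\omega_C^m$ precisely when each $\sigma_i$ is regular away from the nodes, has pole order at most $m(k+1)$ at each node-preimage, and at each node the $k+1$ deepest-pole coefficients on the two adjacent branches agree up to the sign $(-1)^m$.

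The second step is to solve this linear system. On $C_0$ the conditions force $\sigma_0=p(s_0)(ds_0)^m$ with $\deg p\le m(k-1)$; symmetrically $\sigma_2$ is a combination of $t_2^i(dt_2)^m$ for $0\le i\le m(k-1)$; and $\sigma_1=\sum_{j=-m(k+1)}^{m(k-1)}d_j\,s_1^{\,j}(ds_1)^m$. The node-$1$ relations express $d_{-m(k+1)},\dots,d_{-m(k+1)+k}$ in terms of the top $k+1$ coefficients of $p$, the node-$2$ relations express $d_{m(k-1)-k},\dots,d_{m(k-1)}$ in terms of the corresponding coefficients of $\sigma_2$, and these two index ranges are disjoint exactly because $m\ge 2$, so the $2(k+1)$ conditions are independent. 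Counting free parameters --- $m(k-1)+1$ from $\sigma_0$, $m(k-1)+1$ from $\sigma_2$, and the $2k(m-1)-1$ interior coefficients of $\sigma_1$ --- gives $(2m-1)(2k-1)$, and turning on one parameter at a time yields exactly the differentials $\omega_i$ (from $p=s_0^i$, which forces the $C_1$-component $(ds_1)^m/s_1^{\,2m+i}$), $\eta_i$ (from $\sigma_2=t_2^i(dt_2)^m$), and $\chi_i$ (from a single interior coefficient $d_j$). This shows the list is a basis. The dimension can alternatively be obtained from Riemann--Roch, $\chi(C,\omega_C^m)=(2m-1)(2k-1)$, together with $H^1(C,\omega_C^m)=0$: indeed $\omega_C$ has positive degree ($k-1$, $2k$, $k-1$) on $C_0$, $C_1$, $C_2$, so $\omega_C^{1-m}$ has negative degree on every component and no global sections when $m\ge 2$, whence $H^1$ vanishes by Serre duality. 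On that route one needs only linear independence of the listed differentials, which is immediate after restricting to $C_0$, then to $C_2$, and finally comparing $s_1$-exponents on $C_1$.

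The third step is surjectivity of $\Sym^m\HH^0(C,\omega_C)\to\HH^0(C,\omega_C^m)$, which reduces to exhibiting each basis element as a degree-$m$ monomial in \eqref{E:basis}: $\omega_i=x_{a_1}\cdots x_{a_m}$ for any $a_1,\dots,a_m\in\{1,\dots,k\}$ with $a_1+\cdots+a_m=i+m$, $\eta_i$ is the mirror monomial in $y_1,\dots,y_k$, and $\chi_i$ is a product of $x_j$'s and $y_j$'s using at least one factor of each kind, the resulting $s_1$-exponent sweeping out precisely the claimed range of $i$ as the numbers of $x$- and $y$-factors and their labels vary. Since these monomials span the image of the product map and, by the previous step, span $\HH^0(C,\omega_C^m)$, the product map is onto.

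I expect the first step to be the main obstacle: identifying the local generator of $\omega_C$ at an $A_{2k+1}$-singularity and the precise shape of the gluing condition for the $m$-th power --- in particular that it always constrains exactly the $k+1$ deepest-pole coefficients on each branch and that the relevant sign is $(-1)^m$. Everything downstream is linear algebra over $\PP^1$ together with elementary combinatorics of exponents.
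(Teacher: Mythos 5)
Your proposal is correct, and it in fact contains the paper's own argument as the ``alternative route'' you sketch at the end of your second step. The paper's proof is exactly that short version: Riemann--Roch gives $h^0(C,\omega_C^m)=(2m-1)(2k-1)$ (with the $H^1$-vanishing and the linear independence of the listed differentials left implicit), and then one observes that the $\omega_i$, $\eta_i$, $\chi_i$ are precisely the $m$-fold products of the $x_j$'s, of the $y_j$'s, and the mixed products, respectively --- your step three, with the same exponent bookkeeping. What you add is the first two steps: an explicit identification of the local generator $x_k$ of $\omega_C$ at an $A_{2k+1}$-point, the description of $\widehat{\oh}_{C,p}$ as pairs of power series agreeing to order $k$, and the resulting presentation of $\HH^0(C,\omega_C^m)$ by pole bounds and matching conditions, from which the dimension and a basis are read off directly without appealing to Riemann--Roch. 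This buys a self-contained verification (and makes the surjectivity claim transparent rather than a consequence of a dimension count), at the cost of length. One small imprecision: for $i<m(k-1)-k$ the parameter $p=s_0^i$ does \emph{not} force the $C_1$-component $(ds_1)^m/s_1^{2m+i}$, since the pole order $2m+i$ falls below the matching window $[m(k+1)-k,\,m(k+1)]$; the ``natural'' basis vector there is $(s_0^i(ds_0)^m,0,0)$, which differs from $\omega_i$ by $\chi_{m+i}$. As that is a unitriangular change of basis within your span, the listed set is still a basis and nothing breaks, but the phrase ``forces'' should be weakened for small $i$.
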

\begin{proof}
By Riemann-Roch formula, $h^0(C, \omega_C^m)=(2m-1)(2k-1)$. Thus, it suffices to observe that the given 
$(2m-1)(2k-1)$ differentials all lie in the image of the map 
$\Sym^m \HH^0(C,\omega_C) \rightarrow \HH^0(C, \omega_C^m)$. Using the basis of $\HH^0(C,\omega_C)$ given by \eqref{E:basis},
one easily checks that the differentials $\{\omega_i\}_{i=0}^{m(k-1)}$ are precisely those arising as $m$-fold products of 
$x_i$'s, the differentials $\{\eta_i\}_{i=0}^{m(k-1)}$ are those arising as $m$-fold products of $y_i$'s, and the differentials 
$\{\chi_i\}_{i=-k(m-1)+1}^{k(m-1)+1}$ are those arising as mixed $m$-fold products of $x_i$'s and $y_i$'s.
\end{proof}

Next, we show that $|\omega_C|$ is a very ample linear system, 
so that $C$ admits a canonical embedding, and the corresponding Hilbert points are well defined.

\begin{prop}\label{P:A-curve}
 $\omega_C$ is very ample. The complete linear system $|\omega_C|$ embeds $C$ as a curve on a balanced rational normal scroll 
$$\PP^1\times \PP^1 \stackrel{\vert \O_{\PP^1\times\PP^1}(1, k-1)\vert}{\longhookrightarrow} \PP^{g-1}.$$ Moreover, 
$C_0$ and $C_2$ map to $(1,0)$-curves on $\PP^1 \times \PP^1$, and $C_1$ maps to a $(1,k+1)$-curve. In particular, $C$ is a $(3,k+1)$ curve 
on $\PP^1\times \PP^1$ and has a $g^1_3$ cut out by the $(0,1)$-ruling.
\end{prop}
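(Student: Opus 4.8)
The plan is to realize $C$ directly as a divisor on the balanced scroll and then read off the statement via adjunction.

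Fix bihomogeneous coordinates $([a_0{:}a_1],[b_0{:}b_1])$ on $S:=\PP^1\times\PP^1$ and embed $S\hookrightarrow\PP^{g-1}$ by $|\O_S(1,k-1)|$, using the monomial basis $\{a_0\,b_0^{i-1}b_1^{k-i}\}_{i=1}^{k}\cup\{a_1\,b_0^{k-j}b_1^{j-1}\}_{j=1}^{k}$, which has $2k=g$ elements. Since $\O_S(1,k-1)^2=2(k-1)=(g-1)-1$, the image is a surface of minimal degree, i.e.\ a rational normal scroll, and being $\PP^1\times\PP^1$ embedded by a bundle of bidegree $(1,k-1)$ it is the balanced one. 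Inside $S$ consider the divisor
\[
D\;=\;\{a_1=0\}\ \cup\ \{a_0=0\}\ \cup\ \{a_0 b_1^{k+1}=a_1 b_0^{k+1}\},
\]
of bidegree $(1,0)+(1,0)+(1,k+1)=(3,k+1)$. Each component is a smooth rational curve, and a short local computation shows that $D$ is smooth away from $p=([1{:}0],[1{:}0])$ and $q=([0{:}1],[0{:}1])$, where a $(1,0)$-curve meets the $(1,k+1)$-curve with contact order $k+1$; thus $D$ has an $A_{2k+1}$-singularity at each of $p$ and $q$.

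Next I would write down a morphism $\nu\colon C\to S$ and show it is an isomorphism onto $D$. Define $\nu$ on the three copies of $\PP^1$ by
\[
C_0\ni s_0\mapsto([1{:}0],[s_0{:}1]),\qquad C_1\ni s_1\mapsto([1{:}s_1^{k+1}],[1{:}s_1]),\qquad C_2\ni s_2\mapsto([0{:}1],[s_2{:}1]).
\]
These are rigged so that the pullbacks of the monomial basis above are, up to scalars, the sections $x_1,\dots,x_k,y_1,\dots,y_k$ of \eqref{E:basis}; in particular, once $\nu$ is known to be a morphism, it is the canonical map on the smooth locus. The three maps visibly agree set-theoretically at the two gluing points, and the crux of the argument is that $\nu$ descends across the $A_{2k+1}$-singularities and is a local isomorphism there. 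By \eqref{E:gluing-1}, the local ring $\O_{C,p}$ is the subring of $\CC[[t_0]]\times\CC[[s_1]]$ generated by $(t_0,s_1)$ and $(t_0^{k+1},-s_1^{k+1})$, i.e.\ the pairs of power series agreeing in degrees $0,\dots,k$; and in the local coordinates $(a_1/a_0,\,b_1/b_0)$ at $p$, the two branches of $\nu$, after the identification of $t_0$ and $s_1$ coming from \eqref{E:gluing-1}, are $\tau\mapsto(0,\tau)$ and $\tau\mapsto(\tau^{k+1},\tau)$. These have the same $k$-jet, so $\nu$ descends; moreover $b_1/b_0$ and $a_1/a_0$ pull back to $(t_0,s_1)$ and $(0,s_1^{k+1})=\tfrac12\bigl((t_0,s_1)^{k+1}-(t_0^{k+1},-s_1^{k+1})\bigr)$, which generate $\O_{C,p}$, so $\nu$ is a closed immersion near $p$. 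The same holds at $q$ by \eqref{E:gluing-2}, and away from $p,q$ each $\nu|_{C_i}$ is clearly a locally closed immersion. Hence $\nu\colon C\xrightarrow{\ \sim\ }D$.

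It remains to match this embedding of $D$ with the canonical one. By adjunction on the smooth surface $S$,
\[
\omega_D\;\cong\;\bigl(\omega_S\otimes\O_S(D)\bigr)|_D\;\cong\;\O_S(-2,-2)|_D\otimes\O_S(3,k+1)|_D\;\cong\;\O_S(1,k-1)|_D,
\]
and the restriction $H^0(S,\O_S(1,k-1))\to H^0(D,\omega_D)$ is injective, its kernel being $H^0(S,\O_S(1,k-1)\otimes\I_D)=H^0(S,\O_S(-2,-2))=0$; as both sides have dimension $g$, it is an isomorphism. Thus $|\O_S(1,k-1)|$ cuts the complete canonical system on $D$, so, transporting along $\nu$, we conclude that $\omega_C$ is very ample and that the canonical image of $C$ is exactly $D\subset S\subset\PP^{g-1}$: $C_0,C_2$ are the two $(1,0)$-curves, $C_1$ is the $(1,k+1)$-curve, $C$ has bidegree $(3,k+1)$, and the $(0,1)$-ruling cuts a $g^1_3$ since $(3,k+1)\cdot(0,1)=3$. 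The step I expect to be the real obstacle is the local analysis at $p$ and $q$ — verifying that $\nu$ genuinely extends across the $A_{2k+1}$-singularities and is an isomorphism there — which is where the contact order $k+1$ and the explicit description \eqref{E:gluing-1}--\eqref{E:gluing-2} of $\O_{C,p}$ must be used carefully; the scroll degree count, the adjunction computation, and the intersection numbers on $\PP^1\times\PP^1$ are routine bookkeeping. One can also bypass the explicit $\nu$: the curve $D$ carries a $\GG_m$-action on $S$ preserving it, so it is a double $A_{2k+1}$-curve with a $\GG_m$-action, hence the balanced one and thus abstractly $\cong C$, after which the adjunction step alone identifies its embedding with the canonical one.
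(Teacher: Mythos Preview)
Your proof is correct and takes a genuinely different route from the paper's.

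The paper works \emph{forward} from $C$: it writes down the explicit sections $x_i,y_i$ of $\omega_C$, checks that they satisfy the determinantal equations \eqref{E:determinantal} of the balanced scroll, identifies the images of $C_0,C_1,C_2$ by inspection of the linear systems they span, and then verifies very ampleness by hand---separating points across components via intersection numbers on the scroll, and separating tangent vectors at each $A_{2k+1}$-singularity by exhibiting two sections ($y_1$ and $x_{k-1}$) spanning the Zariski cotangent space.

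You instead work \emph{backward} from the target: you build the reducible $(3,k+1)$-divisor $D\subset S$ first, construct an explicit isomorphism $\nu\colon C\xrightarrow{\sim}D$, and then invoke adjunction $\omega_D\cong\O_S(1,k-1)|_D$ together with the vanishing $H^0(S,\O_S(-2,-2))=0$ to conclude that the scroll embedding of $D$ \emph{is} the canonical embedding. This buys you very ampleness for free (it is inherited from $\O_S(1,k-1)$ on $S$), so you never need to separate points or tangent vectors on the image. The price is the local verification that $\nu$ descends and is an isomorphism at the singular points, which you handle correctly using the generators of $\O_{C,p}$ from \eqref{E:gluing-1}; this is essentially the same computation the paper does to separate tangent vectors, just packaged differently.

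Your closing remark---that $D$ inherits a $\GG_m$-action from $S$ and is therefore the balanced double $A_{2k+1}$-curve by uniqueness---is a legitimate shortcut, though it leans on the uniqueness assertion from the introduction, which is stated but not proved there; your explicit $\nu$ is the more self-contained argument.
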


\begin{proof} 
To see that the canonical embedding of $C$ lies on a balanced rational normal scroll in $\PP^{2k-1}$, 
recall that the scroll 
can be defined as the determinantal variety (see \cite[Lecture 9]{harris}):
\begin{equation}\label{E:determinantal}
\rank
 \left(\begin{array}{cccc|cccc}
x_{1} & x_{2} & \cdots & x_{k-1} & y_{k} & y_{k-1} & \cdots & y_{2} \\  
 x_{2} & x_{3} & \cdots & x_{k} & y_{k-1} & y_{k-2} & \cdots & y_{1}
 \end{array}\right)\leq 1.
 \end{equation}
From our explicit description of the basis of $\HH^0(C, \omega_C)$ given by \eqref{E:basis}, 
one easily sees that the differentials $x_i$'s and $y_i$'s on $C$ 
satisfy the determinantal description of \eqref{E:determinantal}.
Moreover,
we see that $\vert \omega_C\vert$ embeds $C_0$ and $C_2$ as degree $k-1$ rational normal curves in $\PP^{2k-1}$ 
lying in the class $(1,0)$ on the scroll. Also, we see that $|\omega_C|$ embeds $C_1$ via the very ample linear 
system $$\lspan\{ 1, s_1, \dots, s_1^{k-1}, s_1^{k+1}, \dots, s_1^{2k}\} \subset \vert \O_{\PP^1}(2k)\vert$$ as a curve in the class $(1,k+1)$. 
It follows that $|\omega_C|$ separates points and tangent vectors on each component of $C$. We now prove that $|\omega_C|$ 
separates points of different components and tangent vectors at the $A_{2k+1}$-singularities. 
First, observe that $C_0$ and $C_2$ span different subspaces. Therefore, being $(1,0)$ 
curves, they must be distinct and non-intersecting. Second, $C_0$ and $C_1$ are the images of two branches of an $A_{2k+1}$-singularity and so have
contact of order at least $k+1$. However, being $(1,0)$ and $(1,k+1)$ curves on the scroll, they have order of contact at most $k+1$. It follows
that $C_0$ and $C_1$ on $S$ meet in a precisely $A_{2k+1}$-singularity. We conclude that $\vert \omega_C\vert$ is a closed embedding
at each $A_{2k+1}$-singularity. 

We can also directly verify that $\vert \omega_C\vert$ separates tangent vectors at an $A_{2k+1}$ singularity of $C$, 
say the one with uniformizers $s_1$ and $t_0$. The local generator of $\omega_C$ at this singularity is
\[
x_{k}=\left(-\frac{dt_0}{t_0^{k+1}}, \frac{ds_1}{s_1^{k+1}}, 0 \right).
\] We observe that on the open affine chart $\spec \CC[(t_0, s_1), (t_0^{k+1}, -s_1^{k+1})]$ defined in Equation \eqref{E:gluing-1}
we have
$y_1=(0, s_1^{k+1}) \cdot x_{k}$ and $x_{k-1}=(t_0, s_1) \cdot x_{k}$. 
Under the identification $\CC[x,y]/(y^2-x^{2k+2})=\CC[(t_0, s_1), (t_0^{k+1}, -s_1^{k+1})]$, we have $(t_0,s_1)=x$ and $(0,s_1^{k+1})=(x^{k+1}-y)/2$.
We conclude that sections $y_1$ and $x_{k-1}$ of $\omega_C$ 
span the cotangent space $(x,y)/(x,y)^2$ and thus separate tangent vectors at the singularity $x=y=0$.

\end{proof}

Finally, the following elementary observation is the key to analyzing the stability of Hilbert points of $C$.

\begin{lemma}\label{L:multiplicityfree}  $\HH^0(C, \omega_C)$ is a multiplicity-free $\Aut(C)$-representation, i.e., no 
irreducible $\Aut(C)$-representation appears more than once in the decomposition of $\HH^0(C, \omega_C)$
into irreducibles. 
\end{lemma}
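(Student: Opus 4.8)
The plan is to diagonalize the $\GG_m\subset\Aut(C)$ action on the explicit basis of $\HH^0(C,\omega_C)$ recorded in \eqref{E:basis}, observe that the resulting weights are pairwise distinct, and then use the semidirect-product structure $\Aut(C)=\GG_m\rtimes\ZZ_2$ to organize the weight spaces into a direct sum of pairwise non-isomorphic irreducible two-dimensional summands. Concretely, substituting $\lambda\cdot s_0=\lambda s_0$, $\lambda\cdot s_1=\lambda^{-1}s_1$, $\lambda\cdot s_2=\lambda s_2$ into the formulas \eqref{E:basis} gives $\lambda\cdot x_i=\lambda^{i}x_i$ and $\lambda\cdot y_i=\lambda^{-i}y_i$ for $1\le i\le k$. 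Thus the $\GG_m$-isotypic decomposition of $\HH^0(C,\omega_C)$ is $\bigoplus_{i=1}^{k}\bigl(\CC x_i\oplus\CC y_i\bigr)$, the weight set is exactly $\{\pm1,\dots,\pm k\}$, and every $\GG_m$-weight space is one-dimensional.

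Next I would bring in the $\ZZ_2$-action. The generator $\sigma$ (acting by $s_i\leftrightarrow t_{2-i}$) interchanges the outer components $C_0$ and $C_2$ and preserves $C_1$, and a direct computation from the action formulas shows $\sigma\lambda\sigma^{-1}=\lambda^{-1}$; hence $\sigma^{*}$ sends the $\GG_m$-eigenspace of weight $i$ to that of weight $-i$. In terms of the basis this reads $\sigma^{*}x_i\in\CC y_i$ and $\sigma^{*}y_i\in\CC x_i$ (one can also just check directly, e.g. $\sigma^{*}x_1=-y_1$). Therefore each plane $W_i:=\CC x_i\oplus\CC y_i$ is an $\Aut(C)$-subrepresentation.

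Finally I would conclude: for each $i$ the representation $W_i$ is irreducible, since a nonzero proper $\Aut(C)$-invariant subspace would in particular be $\GG_m$-stable, hence equal to one of the eigenlines $\CC x_i$ or $\CC y_i$, neither of which is preserved by $\sigma$; and $W_i\not\cong W_j$ for $i\ne j$ because the $\GG_m$-weight sets $\{i,-i\}$ and $\{j,-j\}$ are disjoint. Hence $\HH^0(C,\omega_C)=\bigoplus_{i=1}^{k}W_i$ is a direct sum of pairwise non-isomorphic irreducibles, which is the assertion. The only step needing a modicum of care is confirming that $\sigma$ genuinely swaps the $x$- and $y$-eigenlines rather than behaving otherwise, but this is forced by the weight computation together with the fact that $\sigma$ exchanges $C_0$ and $C_2$; there is no real obstacle here --- one is essentially invoking the (elementary) representation theory of $\GG_m\rtimes\ZZ_2$.
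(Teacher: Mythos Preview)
Your proof is correct and shares its starting point with the paper: both compute that $\GG_m$ acts on $x_i$ with weight $i$ and on $y_i$ with weight $-i$, so the $2k$ weights are pairwise distinct. The paper simply stops there, since distinctness of the $\GG_m$-weights already forces multiplicity-freeness for the full group $\Aut(C)$ (two isomorphic $\Aut(C)$-irreducibles would restrict to isomorphic $\GG_m$-representations, forcing a repeated weight). You instead go further and explicitly produce the $\Aut(C)$-irreducible decomposition $\bigoplus_{i=1}^{k} W_i$ with $W_i=\CC x_i\oplus\CC y_i$, verifying irreducibility and pairwise non-isomorphism directly. This extra work is not needed for the lemma or its later application (where only the $\GG_m$-statement is used), but it is correct and gives more precise information about the representation.
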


\begin{proof} Consider the basis of $\HH^0(C, \omega_C)$ given in \eqref{E:basis}.
Then $\GG_m\subset \Aut(C)$ acts on $x_i$ with weight $i$ and on $y_i$ with weight $-i$. Thus $\HH^0(C, \omega_C)$ 
decomposes into $g=2k$ distinct characters of $\GG_m$. 
\end{proof}

\section{Monomial bases and semistability}
\label{S:bases}
Since $\HH^0(C,\omega_C)$ is a multiplicity-free representation of $\gm \subset \Aut(C)$
by Lemma \ref{L:multiplicityfree}, we can apply the Kempf-Morrison Criterion  
\cite[Proposition 2.3]{AFS-odd-stability} to prove semistability of $C$.
Namely, to prove that the $m^{th}$ Hilbert point of the canonically 
embedded balanced double $A_{2k+1}$-curve $C$ is semistable, it suffices to check that 
for every one-parameter subgroup 
$\rho\co \gm \rightarrow \SL(g)$ acting diagonally on the basis 
$\{x_1, \ldots, x_{k}, y_1, \ldots, y_{k}\}$ with integer weights $\lambda_1, \ldots, \lambda_{k}, \nu_1, \ldots, \nu_{k}$, 
there exists a monomial basis for $\HH^0(C, \omega_C^m)$ of non-positive $\rho$-weight.  
Explicitly, this means that we must exhibit a set $\B$ of $(2m-1)(2k-1)$ 
degree $m$ monomials in the variables $\{x_i,y_i\}_{i=1}^{k}$ 
with the properties that:
\begin{enumerate}
\item $\B$ maps to a basis of $\HH^0(C, \omega_C^m)$ 
via $\Sym^m \HH^0(C,\omega_C) \rightarrow \HH^0(C, \omega_C^m)$.
\item $\B$ has non-positive $\rho$-weight, i.e., if $\B=\{m_i\}_{i=1}^{(2m-1)(2k-1)}$, and 
$m_i=\prod_{j=1}^{k}x_j^{a_{ij}}y_j^{b_{ij}}$, then 
\[
\sum_{i=1}^{(2m-1)(2k-1)} \sum_{j=1}^k (a_{ij}\lambda_j+b_{ij}\nu_j) \leq 0.
\]
\end{enumerate}

\begin{theorem}\label{T:ribbon}
If $C \subset \PP \HH^0(C, \omega_C)$ is a canonically embedded balanced double $A_{2k+1}$-curve, then the Hilbert points $[C]_m$ are semistable for all $m \geq 2$.
\end{theorem}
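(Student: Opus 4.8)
The plan is to verify the numerical criterion stated just above: for every one-parameter subgroup $\rho\colon\gm\to\SL(g)$ acting diagonally on $\{x_1,\dots,x_k,y_1,\dots,y_k\}$ with weights $\lambda_1,\dots,\lambda_k,\nu_1,\dots,\nu_k$ (so that $\sum_i\lambda_i+\sum_i\nu_i=0$), one must produce a monomial basis $\B$ of $\HH^0(C,\omega_C^m)$ of non-positive $\rho$-weight. The first observation is that this construction \emph{decouples}. By the proof of Lemma~\ref{L:pluricanonical-bases} the distinguished basis of $\HH^0(C,\omega_C^m)$ splits into three families: the $\omega_i$ $(0\le i\le m(k-1))$ are hit only by $x$-monomials, the $\eta_i$ only by $y$-monomials, and the $\chi_i$ only by genuinely mixed monomials; and each $\omega_i$ (resp.\ $\eta_i$, $\chi_i$) is the image of many distinct degree-$m$ monomials. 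Hence a valid $\B$ is obtained by choosing, independently for each basis vector, one monomial mapping to it, and the smallest $\rho$-weight is achieved by picking in each class a monomial of least weight. Writing $X_i,Y_i,Z_i$ for these minimal representatives of $\omega_i,\eta_i,\chi_i$ and $w(\cdot)$ for $\rho$-weight, it therefore suffices to prove
\[
\sum_{i=0}^{m(k-1)} w(X_i)\;+\;\sum_{i=0}^{m(k-1)} w(Y_i)\;+\;\sum_{i=1-k(m-1)}^{k(m-1)-1} w(Z_i)\;\le\;0 .
\]

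It is worth recording the conceptual shape of this inequality. If $v(\B)\in\ZZ^{2k}$ records the number of occurrences of each of $x_1,\dots,x_k,y_1,\dots,y_k$ in a valid basis $\B$, then every $v(\B)$ has the same coordinate sum $D:=m(2m-1)(2k-1)$, and the displayed estimate holds for all $\rho$ with $\sum_i\lambda_i+\sum_i\nu_i=0$ precisely when the balanced vector $\tfrac{D}{2k}(1,\dots,1)$ lies in the convex hull of $\{v(\B)\}$ (which is just the assertion that $[C]_m$ is semistable). So the concrete task is to write down, for each $\rho$, a valid basis whose variable-count vector sits on the $\rho$-nonpositive side of the origin — in practice, a small explicit family of bases whose count vectors straddle the balanced point in every coordinate direction.

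To carry this out I would first reduce the one-parameter subgroup. The involution in $\Aut(C)=\gm\rtimes\ZZ_2$ sends $x_i\mapsto -y_i$ and $y_i\mapsto -x_i$, so it carries valid monomial bases to valid monomial bases, interchanges $(\lambda_\bullet,\nu_\bullet)$ with $(\nu_\bullet,\lambda_\bullet)$, and preserves $\rho$-weights; hence we may assume $\sum_i\lambda_i\le 0\le\sum_i\nu_i$. Then I would make the choices. For $\omega_i$, writing $i=qm+r$ with $0\le r<m$, take the most balanced composition $x_{q+1}^{\,m-r}x_{q+2}^{\,r}$ (the extremes $\omega_0=x_1^m$, $\omega_{m(k-1)}=x_k^m$ being forced), and symmetrically for $\eta_i$. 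The real flexibility is in the $\chi_i$: a mixed monomial $x_{j_1}\cdots x_{j_p}y_{l_1}\cdots y_{l_q}$ with $p+q=m$, $p,q\ge 1$ maps to $\chi_i$ for $i=\sum_a j_a-\sum_b l_b$, and I would choose it so as to use as few $y$-factors as the constraints permit — shifting weight onto the $x$-variables, whose total weight is $\le 0$ — while keeping the indices inside each monomial as balanced as the value of $i$ allows, so that the two forced extremes $\chi_{1-k(m-1)}=x_1y_k^{m-1}$ and $\chi_{k(m-1)-1}=x_k^{m-1}y_1$ carry exactly the amount of extremal weight that the single relation $\sum_i\lambda_i+\sum_i\nu_i=0$ can still cancel. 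Summing all chosen weights, collecting the coefficient of each $\lambda_j$ and each $\nu_j$, and invoking only $\sum_i\lambda_i+\sum_i\nu_i=0$ (and $\sum_i\lambda_i\le 0$) should then yield the inequality.

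The step I expect to be the main obstacle is precisely this last verification. Because $\Aut(C)$ is only $\gm\rtimes\ZZ_2$, there is no Weyl-type symmetry that would reduce the continuum of one-parameter subgroups to finitely many cases; consequently the mixed monomials $\chi_i$ (and the non-extremal $\omega_i,\eta_i$) must be chosen $\rho$-adaptively, and the resulting linear inequality in $(\lambda_\bullet,\nu_\bullet)$ has to be checked uniformly over all admissible weight vectors. The tightest situations will be those $\rho$ with many sign changes among the $\lambda_i$ or $\nu_i$: these force the $\chi_i$ toward the ends of their admissible range and leave the least slack against $\sum_i\lambda_i+\sum_i\nu_i=0$, so the bookkeeping that shows the total multiplicity of each variable lands on the correct side of $D/(2k)$ is where the argument will have to be most careful.
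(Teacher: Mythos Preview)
Your decomposition into the $\omega$-, $\eta$-, and $\chi$-blocks is exactly right, and so is the convex-hull reformulation. The gap is that you have not supplied the mechanism that actually produces the required bases in either block; the paper's proof rests on two devices you are missing.

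For the $\omega$- and $\eta$-blocks, your ``most balanced composition'' $x_{q+1}^{\,m-r}x_{q+2}^{\,r}$ is a \emph{single} monomial basis; its variable-count vector is not the balanced vector (that vector is not even integral), so there exist weight vectors $(\lambda_\bullet)$ making its $\rho$-weight positive. The point you have not noticed is that choosing an $x$-monomial basis of minimal $\rho$-weight for $\lspan\{\omega_i\}$ is literally the Hilbert--Mumford problem for the degree-$(k-1)$ rational normal curve in $\PP^{k-1}$: under the restriction $\HH^0(C,\omega_C)\to\HH^0(C_0,\O_{\PP^1}(k-1))$ the $x_i$ become the standard basis and $\lspan\{\omega_i\}$ becomes $\HH^0(\PP^1,\O_{\PP^1}(m(k-1)))$. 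The paper invokes Kempf's theorem on the stability of the rational normal curve to get a basis $\B_\omega$ of $\rho$-weight $\le m(m(k-1)+1)\cdot\tfrac{1}{k}\sum_j\lambda_j$, and symmetrically for $\B_\eta$; the two bounds sum to zero because $\sum_j(\lambda_j+\nu_j)=0$. So the $\omega\cup\eta$-block is disposed of by a citation, not by an explicit construction.

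For the $\chi$-block, the paper does \emph{not} attempt to make a $\rho$-adaptive choice and then verify a uniform linear inequality, which is the route you anticipate being hard. Instead it uses an averaging trick: it writes down two finite families of $\chi$-bases such that the sum of the $\rho$-weights over one family equals a \emph{positive} multiple of $(\lambda_k+\nu_k)$ and over the other a \emph{negative} multiple of $(\lambda_k+\nu_k)$ (after using $\sum_j(\lambda_j+\nu_j)=0$ to eliminate the other variables). Since one of $\lambda_k+\nu_k\ge 0$ or $\le 0$ holds, some basis in the appropriate family has non-positive weight. Concretely, the ``positive'' family is built from bases that load $x_k,y_k$ as heavily as possible (monomials in the ideal $x_k^{m-1}(y_1,\dots,y_k)+x_k^{m-2}y_k(\dots)+\cdots$), while the ``negative'' family avoids $x_k,y_k$ almost entirely; an index $s=1,\dots,k-1$ is used to spread the remaining imbalance uniformly over $x_1,\dots,x_{k-1},y_1,\dots,y_{k-1}$ so that upon summing over $s$ every coefficient except that of $\lambda_k+\nu_k$ cancels. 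This reduces the verification to a single sign, and is the idea your proposal lacks.
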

As an immediate corollary of this result, we obtain a proof of Theorem \ref{T:main-trigonal} and hence of Theorem \ref{T:main-canonical}:
\begin{corollary}[Theorem \ref{T:main-trigonal}]\label{C:main-trigonal}
A general smooth trigonal curve of genus $g=2k$
embedded by the complete canonical linear system has a semistable $m^{th}$ Hilbert point for every $m \geq 2$.
\end{corollary}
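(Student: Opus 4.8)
The plan is first to deduce Corollary \ref{C:main-trigonal} from Theorem \ref{T:ribbon} and Proposition \ref{P:A-curve} by a standard openness argument, and then --- since the substance of the matter is Theorem \ref{T:ribbon} itself --- to sketch how one constructs the monomial bases demanded by the Kempf--Morrison reduction set up just above its statement.

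For the corollary: by Proposition \ref{P:A-curve} the canonically embedded balanced double $A_{2k+1}$-curve $C$ is a divisor of class $\O_S(3,k+1)$ on the balanced scroll $S=\PP^1\times\PP^1\hookrightarrow\PP^{g-1}$, cut out there by $\O_S(1,k-1)$. A general member $C'\in|\O_S(3,k+1)|$ is smooth and irreducible (Bertini), has arithmetic genus $g=2k$ by adjunction, carries the $g^1_3$ cut out by the ruling $|\O_S(0,1)|$, and satisfies $\omega_{C'}=\O_S(1,k-1)|_{C'}$, a line bundle of degree $2g-2$ with $h^0=g$; hence $C'$ is a canonically embedded smooth trigonal curve of genus $g$. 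Thus $C$ is a flat limit, inside $\PP^{g-1}$, of such curves, so $[C]$ lies in the closure $\overline{\T}$ of the irreducible locus $\T$ of canonically embedded smooth trigonal curves of genus $g$ in the relevant Hilbert scheme. By Theorem \ref{T:ribbon} the point $[C]_m$ is semistable; since the semistable locus is $\SL$-invariant and open, it is a nonempty open subset of the irreducible variety $\overline{\T}$ containing $[C]_m$, hence meets the dense open subset $\T$. Therefore a general smooth trigonal curve of genus $g=2k$ has semistable $m^{th}$ Hilbert point for every $m\ge2$. (Replacing $\T$ by the entire Hilbert component of smooth canonical curves --- which likewise contains $[C]$ in its closure and is irreducible --- gives Theorem \ref{T:main-canonical}.)

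It remains to sketch Theorem \ref{T:ribbon}. The discussion preceding its statement, via Lemma \ref{L:multiplicityfree} and the Kempf--Morrison criterion, already recasts it as the following combinatorial task: given a diagonal one-parameter subgroup $\rho$ with integer weights $\lambda_1,\dots,\lambda_k$ on $x_1,\dots,x_k$ and $\nu_1,\dots,\nu_k$ on $y_1,\dots,y_k$ satisfying $\sum_j(\lambda_j+\nu_j)=0$, exhibit a system of $(2m-1)(2k-1)$ degree-$m$ monomials mapping to a basis of $\HH^0(C,\omega_C^m)$ of total $\rho$-weight $\le0$. By Lemma \ref{L:pluricanonical-bases} such a system amounts to three independent choices: for each $\ell\in\{0,\dots,m(k-1)\}$ one monomial $X_\ell$ in the $x_j$ alone whose exponents $a_j$ satisfy $\sum_j (j-1)a_j=\ell$ (a representative of $\omega_\ell$), for each such $\ell$ one monomial $Y_\ell$ in the $y_j$ alone (a representative of $\eta_\ell$), and for each $i$ one genuinely mixed monomial $Z_i$ (involving both some $x_j$'s and some $y_j$'s) whose $y$-index sum minus $x$-index sum equals $-i$ (a representative of $\chi_i$). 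Any such collection is automatically a basis, because the three families of target differentials are supported on complementary subsets of $\{C_0,C_1,C_2\}$ and are distinguished within each family by their pole orders along $C_1$; so one may choose each $X_\ell$, $Y_\ell$, $Z_i$ of least $\rho$-weight in its class and then estimate the total.

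The idea of the estimate is that the ``balanced'' choices --- distributing the $x_j$'s (resp.\ $y_j$'s, resp.\ both) among the $X_\ell$ (resp.\ $Y_\ell$, resp.\ $Z_i$) as evenly as the constraints allow --- produce a total whose leading term is a positive multiple of $\sum_j(\lambda_j+\nu_j)=0$, while the unavoidable departures from balance (for instance $\omega_0$ forces $X_0=x_1^m$ and $\omega_{m(k-1)}$ forces $X_{m(k-1)}=x_k^m$) contribute a correction which, after an exchange argument exploiting that one always takes the cheapest representative, is controlled by an explicit expression that is manifestly $\le0$ because the numbers $\lambda_j+\nu_j$ sum to $0$ --- e.g.\ when $g=4$ and $m=2$ the total collapses exactly to $\min(\lambda_1+\nu_1,\lambda_2+\nu_2)$. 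I expect the main obstacle to be the bookkeeping for the mixed ($\chi$) monomials: unlike the $X_\ell$ and $Y_\ell$, which only see $\lambda$ or only $\nu$, the $Z_i$ couple the two weight vectors, their admissible representatives form a large set of lattice points, and the optimal choices for the three families must be made compatibly rather than in isolation --- all the more so since a perfectly balanced system of $X_\ell$ need not exist, e.g.\ when $k\nmid m(m(k-1)+1)$, so the $\chi$-family must absorb the residual imbalance. One therefore organizes the proof as a case analysis on the orderings of the $\lambda_j$ and $\nu_j$, reduced to finitely many extremal $\rho$ by invoking the $\ZZ_2\subset\Aut(C)$ symmetry $x_j\leftrightarrow y_j$ together with the homogeneity and piecewise-linearity of the weight function, and in each case writes down the compatible optimal choice of $X_\ell$, $Y_\ell$, $Z_i$ and verifies the inequality directly.
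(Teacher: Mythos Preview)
Your deduction of the corollary from Theorem \ref{T:ribbon} is correct and essentially identical to the paper's own proof: place $C$ in the class $(3,k+1)$ on the balanced scroll, deform to a smooth member of the linear system (which is canonically embedded and trigonal), and invoke openness of semistability.

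Your added sketch of Theorem \ref{T:ribbon}, however, diverges from the paper's argument and contains a genuine gap in strategy. You correctly isolate the three families $\{\omega_i\}$, $\{\eta_i\}$, $\{\chi_i\}$, but your proposed execution --- pick a least-weight representative in each class, then run a case analysis on the orderings of the $\lambda_j,\nu_j$ --- is not what the paper does and is not clearly feasible beyond the toy case $k=m=2$ you check. The paper instead treats the $\omega$- and $\eta$-blocks by a black-box appeal to Kempf's semistability of the rational normal curve: restricting to $C_0$ (resp.\ $C_2$) identifies $\lspan\{\omega_i\}$ (resp.\ $\lspan\{\eta_i\}$) with $\HH^0(\PP^1,\O(m(k-1)))$, and Kempf guarantees a monomial basis of $\rho$-weight at most $m(m(k-1)+1)\cdot\tfrac{1}{k}\sum\lambda_j$ (resp.\ $\sum\nu_j$). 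These two bounds \emph{sum to zero}, so $\B_\omega\cup\B_\eta$ already has nonpositive weight with no residual imbalance for the $\chi$-block to absorb. Your concern about divisibility of $m(m(k-1)+1)$ by $k$ is therefore misplaced.

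For the $\chi$-block the paper does not attempt a case analysis on orderings either. Instead (Lemma \ref{L:basis}) it writes down, for each $m$, two explicit families of $\chi$-bases whose total $\rho$-weights are respectively a positive and a negative multiple of the single scalar $\lambda_k+\nu_k$; since one of $\lambda_k+\nu_k\ge 0$ or $\le 0$ must hold, some basis in one of the families has nonpositive weight. The whole argument thus hinges on one sign, not on the full combinatorics of the weight orderings. Your piecewise-linear reduction to ``finitely many extremal $\rho$'' is correct in principle but would leave you with a large and unstructured case list; the paper's device sidesteps this entirely.
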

\begin{proof}[Proof of Corollary]
By Proposition \ref{P:A-curve} the canonical embedding of the balanced double $A_{2k+1}$-curve $C$ lies on a balanced surface scroll in $\PP^{2k-1}$
in the divisor class $(3, k+1)$.
It follows that $C$ deforms flatly to a smooth curve in the class $(3, k+1)$ on the scroll. Such a curve is a smooth trigonal canonically embedded curve.
The semistability of a general deformation of $C$ follows from the openness of semistable locus. 
\end{proof}
\begin{proof}[Proof of Theorem \ref{T:ribbon}]
Recall from Lemma \ref{L:pluricanonical-bases} that 
\[
\HH^0(C,\omega_C^m)=\lspan \{ \omega_i \}_{i=0}^{m(k-1)}\oplus \lspan\{\eta_i \}_{i=0}^{m(k-1)}\oplus\lspan \{\chi_i\}_{i=-k(m-1)+1}^{k(m-1)-1}.
\]
Now, given a one-parameter subgroup $\rho$ as above, we will construct the requisite monomial basis $\B$ as a union
$$
\B=\B_{\omega} \cup \B_{\eta} \cup \B_{\chi},
$$
where $\B_{\omega}, \B_{\eta}$, and $\B_{\chi}$ are collections of degree 
$m$ monomials which map onto the bases of the subspaces spanned by 
$\{\omega_i\}_{i=0}^{m(k-1)}, \{\eta_i\}_{i=0}^{m(k-1)}$ and $\{\chi_i\}_{i=-k(m-1)+1}^{k(m-1)-1}$, respectively.

To construct $\B_{\omega}$ and $\B_{\eta}$, we use Kempf's 
proof of the stability of Hilbert points of a rational normal curve. 
More precisely, consider the component $C_0$ of $C$ with the uniformizer $s_0$ at $0\in C_0$.
Evidently, $\omega_{C}|_{C_0} \simeq \O_{\PP^1}(k-1)$. The restriction map 
$\HH^0(C, \omega_C) \rightarrow \HH^0(\PP^1, \O_{\PP^1}(k-1))$ identifies $\{x_i\}_{i=1}^{k}$ with a basis of 
$\HH^0(\PP^1, \O_{\PP^1}(k-1))$ given by $\{1,s_0,\dots, s_0^{k-1}\}$. Under this identification, the subspace $\lspan\{\omega_i\}_{i=0}^{m(k-1)}$ is identified with 
$\HH^0(\PP^1, \O_{\PP^1}(m(k-1)))$. 
Set $\lambda=\sum_{i=1}^k \lambda_i/k$. Given a one-parameter subgroup $\widetilde{\rho}\co \gm \ra \SL(k)$ acting 
on $(x_1,\dots, x_k)$ diagonally with weights $(\lambda_1-\lambda, \dots, \lambda_k-\lambda)$, Kempf's result on the semistability of a rational normal 
curve in $\PP^{k-1}$ \cite[Corollary 5.3]{kempf},
implies the existence of a monomial basis $\B_\omega$ of $\HH^0(\PP^1, \O_{\PP^1}(m(k-1)))$ with non-positive $\widetilde{\rho}$-weight.  
Under the above identification, $\B_{\omega}$ is a monomial basis of $\lspan\{\omega_i\}_{i=0}^{m(k-1)}$ of $\rho$-weight
at most $m(m(k-1)+1)\lambda$. Similarly, if $\nu=\sum_{i=1}^k\nu_i/k$, we deduce the existence 
of a monomial basis $\B_{\eta}$ of  $\lspan\{\eta_i \}_{i=0}^{m(k-1)}$ whose $\rho$-weight is at most $m(m(k-1)+1)\nu$. 
Since $\lambda+\nu=0$, it follows that the total $\rho$-weight of $\B_{\omega} \cup \B_{\eta}$ is non-positive.

Thus, to construct a monomial basis $\B$ of non-positive $\rho$-weight, it remains to construct a monomial basis $\B_{\chi}$ of non-positive $\rho$-weight
for the subspace $$\lspan\{\chi_i\}_{i=-k(m-1)-1}^{k(m-1)-1} \subset \HH^0(C, \omega_C^m).$$  
In Lemma \ref{L:basis}, proved below, we show the existence of such a basis. Thus, we obtain the desired monomial basis $\B$ and finish the proof.
\end{proof}

Note that if we define the {\em weighted degree} by $\deg(x_i)=i$ and $\deg(y_i)=-i$, then a set $\B_{\chi}$ of $2k(m-1)-1$ degree $m$ monomials in $\{x_1,\ldots, x_k, $ $y_1, \ldots, y_k\}$ maps to a basis of $\lspan\{\chi\}_{i=k(m-1)-1}^{k(m-1)+1}$ if and only if it satisfies the following two conditions:
\begin{enumerate}
\item Each monomial has both $x_i$ and $y_i$ terms,
\item Each weighted degree from $(m-1)k-1$ to $-(m-1)k+1$ occurs exactly once.
\end{enumerate}
We call such a set of monomials a {\em $\chi$-basis}. The following combinatorial lemma completes the proof of Theorem \ref{T:ribbon}.
\begin{lemma}\label{L:basis}
Suppose $\rho\co \gm\ra \SL(2k)$ is a one-parameter subgroup which 
acts on $\{x_1, \ldots, x_{k}, y_1, \ldots, y_{k}\}$ diagonally with integer weights $\lambda_1, \ldots, \lambda_{k}, \nu_1, \ldots, \nu_{k}$
satisfying $\sum_{i=1}^k (\lambda_i+\nu_i)=0$. Then there exists a $\chi$-basis with non-positive $\rho$-weight.
\end{lemma}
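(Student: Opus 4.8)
The plan is to construct the $\chi$-basis explicitly and combinatorially. Recall that a $\chi$-basis must consist of $2k(m-1)-1$ degree-$m$ monomials in $x_1,\dots,x_k,y_1,\dots,y_k$ such that (i) each monomial involves at least one $x_i$ and at least one $y_j$, and (ii) the weighted degrees (with $\deg x_i = i$, $\deg y_i = -i$) realize each integer from $-(m-1)k+1$ to $(m-1)k-1$ exactly once. The condition (ii) forces a bijection between our monomials and this interval of $2k(m-1)-1$ consecutive integers, and (i) is automatic once we choose monomials that are genuinely ``mixed.'' The key point is that there is an enormous amount of freedom: for a given weighted degree $d$, there are typically many mixed monomials of weighted degree $d$, and we get to pick the one whose $\rho$-weight $\sum_j (a_j \lambda_j + b_j \nu_j)$ is smallest. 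So the strategy is: for each $d$ in the interval, identify a convenient family of mixed degree-$m$ monomials of weighted degree $d$, and then argue that at least one choice across all $d$ makes the total $\rho$-weight non-positive.

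The first step I would carry out is to pin down, for each weighted degree $d$, a clean ``pair'' of candidate monomials that are swaps of each other, so that their $\rho$-weights average to something controllable. Concretely, note that $x_a y_b$ has weighted degree $a - b$, and a monomial $x_a y_b \cdot \mu$ and $x_b y_a \cdot \mu$ (for $\mu$ a common degree-$(m-2)$ factor) have weighted degrees $d$ and $-d$ respectively, while their $\rho$-weights sum to $2\,\mathrm{wt}(\mu) + \lambda_a + \nu_a + \lambda_b + \nu_b$. This suggests organizing the $2k(m-1)-1$ degrees into $d=0$ together with $k(m-1)-1$ symmetric pairs $\{d,-d\}$, and choosing the two monomials of each pair to be such a swap-pair. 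Then the total $\rho$-weight of $\B_\chi$ becomes a sum of terms of the form $2\,\mathrm{wt}(\mu) + (\lambda_a+\nu_a) + (\lambda_b+\nu_b)$ over the pairs, plus the weight of the single degree-$0$ monomial. Using the hypothesis $\sum_i(\lambda_i+\nu_i)=0$, I would try to choose the combinatorial data (which indices $a,b$ and which monomial factors $\mu$ appear, and with what multiplicity) so that the $(\lambda+\nu)$-contributions telescope to something bounded by a multiple of $\sum(\lambda_i+\nu_i)=0$, and so that the ``$\mathrm{wt}(\mu)$'' contributions cancel in pairs or can likewise be bounded.

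The concrete bookkeeping I would do: realize each weighted degree $d$ with $1 \le d \le k(m-1)-1$ by writing $d = q k + r$ with $0 \le r < k$ and using a monomial built from $x_k$'s, one $x_r$ (or rather $x_{r}$ with a compensating factor), and enough $y$'s to make a mixed degree-$m$ monomial — something like $x_k^{\,?}\, x_{r}\, y_k^{\,?}\, y_1$ — choosing the exponents to hit degree $d$ and total degree $m$. The mirror monomial for $-d$ swaps all $x$'s and $y$'s. For $d=0$ take $x_1 y_1$ times a balanced degree-$(m-2)$ monomial, e.g. $(x_1 y_1)^{\lceil (m-1)/2 \rceil}\cdots$, adjusted to be genuinely mixed of total degree $m$. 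Then the total weight is $\sum_{d} (\text{weight of }d\text{-monomial} + \text{weight of }(-d)\text{-monomial})$, which I want to show is $\le 0$. The cleanest route is to compare with the ``unweighted'' baseline: since for each symmetric pair the two monomials are exact $x\leftrightarrow y$ mirrors, their combined weight is invariant under replacing $(\lambda_i,\nu_i)$ by $(\lambda_i + c, \nu_i + c)$ only up to a multiple of $\sum(\lambda_i+\nu_i)$; normalizing so that this vanishes, the mirror-symmetry should force the combined weight of each pair to be non-positive, or at worst to be controlled by a single explicit linear functional that vanishes on the constraint locus.

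The main obstacle, I expect, is that a completely uniform ``mirror-pair'' construction may fail to satisfy condition (ii) — i.e., the naive family of monomials might repeat a weighted degree or skip one — because the constraint $\deg = m$ limits how many $x_k$'s and $y_k$'s one can use, so for $|d|$ close to the extreme value $k(m-1)-1$ one is forced into essentially unique monomials (e.g. $x_k^{m-1} x_{?}\cdots$), and there the freedom to optimize the $\rho$-weight disappears. So the real work is a careful case analysis near the boundary of the degree interval: show that the forced extremal monomials are themselves mirror-pairs (so their combined weight is still governed by $\sum(\lambda_i+\nu_i)$), and that in the ``interior'' range of degrees there is always enough slack to both tile the remaining degrees bijectively and keep the running weight non-positive. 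I would also need the elementary fact that $\sum_{i=1}^{k}(\lambda_i+\nu_i)=0$ is exactly the condition making the ``all-$x_k$-and-$y_k$'' baseline basis have weight $0$, which anchors the whole estimate; everything else is then a perturbation argument showing any necessary deviations from that baseline only decrease the weight or leave it unchanged.
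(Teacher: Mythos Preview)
Your mirror-pairing idea is the right first move, and it is exactly what the paper uses: an $\iota$-symmetric collection (with $\iota$ swapping $x_i\leftrightarrow y_i$) has total $\rho$-weight of the form $\sum_{j=1}^{k} c_j(\lambda_j+\nu_j)$, where $c_j$ is the total number of occurrences of $x_j$ (equivalently of $y_j$). The gap is in the next step. You assert that this functional ``vanishes on the constraint locus'' or can be made to telescope to a multiple of $\sum(\lambda_i+\nu_i)=0$. That would require $c_j$ to be independent of $j$, which is impossible by a divisibility count: a $\chi$-basis has $m(2k(m-1)-1)$ variable-slots in total, so equal $c_j$'s would force $c_j=m(2k(m-1)-1)/(2k)$, which is not an integer (already for $m=2$ this is $(2k-1)/k$). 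Thus any single $\iota$-symmetric $\chi$-basis has weight $\sum_j c_j(\lambda_j+\nu_j)$ with the $c_j$ genuinely non-constant, and on the hyperplane $\sum(\lambda_j+\nu_j)=0$ this linear form can take either sign. Your ``perturbation argument showing any necessary deviations \dots\ only decrease the weight'' cannot hold for all $\rho$: whatever basis you write down destabilizes under $\rho\mapsto\rho^{-1}$. (Relatedly, there is no ``all-$x_k$-and-$y_k$'' baseline $\chi$-basis: monomials in $x_k,y_k$ alone have weighted degree divisible by $k$, so they cannot tile the full interval of degrees.)

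What rescues the argument is not a single clever basis but a \emph{pair} of constructions. The paper engineers $\iota$-symmetric $\chi$-bases in which the counts $c_1,\dots,c_{k-1}$ are all equal, so that the weight collapses to a definite multiple of $(\lambda_k+\nu_k)$; crucially, it produces two such constructions whose multiples of $(\lambda_k+\nu_k)$ have opposite signs (for $m=2$ these are explicit bases of weights $-(\lambda_k+\nu_k)$ and $(k-1)(\lambda_k+\nu_k)$; for $m\ge 3$ one averages over a finite family indexed by $s=1,\dots,k-1$ to equalize $c_1,\dots,c_{k-1}$). Since either $\lambda_k+\nu_k\ge 0$ or $\lambda_k+\nu_k\le 0$, one of the two has non-positive weight. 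Your proposal is missing exactly this dichotomy: you need to commit to building two bases (or two averaged families) with opposite-signed coefficients, not one basis whose weight you hope is $\le 0$.
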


\begin{proof}[Proof of Lemma \ref{L:basis} for $m=2$]

Take the first $\chi$-basis to be
\begin{multline*}
\B_1:=\{ x_{k}y_1, x_{k-1}y_1, x_{k-1}y_2, x_{k-2}y_2, x_{k-2}y_3, \dots \\ \dots, x_{i}y_{k-i}, x_{i}y_{k-i-1}, 
\dots, x_2y_{k-1}, x_1y_{k-1}, x_1y_{k}\}
\end{multline*}
In this basis, all variables except $x_k$ and $y_k$ occur twice and $x_k$, $y_k$ occur once each. Thus 
$$w_{\rho}(\B_1)=2(\lambda_1+\cdots+\lambda_{k-1})+2(\nu_1+\cdots+\nu_{k-1})+\lambda_k+\nu_k=-(\lambda_k+\nu_k).$$
Take the second $\chi$-basis to be
\begin{align*}
\B_2:=\{ x_{k}y_1, x_{k}y_2, \dots, x_ky_i, \dots,x_{k}y_{k}, x_{k-1}y_{k}, x_{k-2}y_{k}, \dots, x_iy_k, \dots, x_1y_{k}\}.
\end{align*}
We have
$$w_{\rho}(\B_2)=(k-1)(\lambda_k+\nu_k).$$\\
For any one-parameter subgroup $\rho$, we must have either $\lambda_k+\nu_k \geq 0$ or $\lambda_k +\nu_k \leq 0$. Thus, either $\B_1$ or $\B_2$ gives a $\chi$-basis of non-positive weight.
\end{proof}

\begin{proof}[Proof of Lemma \ref{L:basis} for $m\geq 3$] 
We will prove the Lemma by exhibiting one collection of $\chi$-bases whose $\rho$-weights sum to a positive multiple of $\lambda_k +\nu_k$ and a collection of $\chi$-bases whose $\rho$-weights sum to a negative multiple of $\lambda_k+\nu_k$. Since, for any given one-parameter subgroup $\rho$,  we have either $\lambda_k+\nu_k \geq 0$ or $\lambda_k +\nu_k \leq 0$, it follows at once that one of our $\chi$-bases must have non-positive weight. We begin by writing down $\chi$-bases maximizing the occurrences of $x_k$ and $y_k$ while balancing the occurrences of the other variables.
Define $T_1$ as the set of degree $m$ monomials of the ideal
\begin{multline*}
x_{k}^{m-1}(y_1,\dots, y_{k-1}, y_{k}) + x_{k}^{m-2}y_{k} (y_1,\dots, y_{k-1}, y_{k}, x_1,\dots,x_{k-1})+\cdots \\
+x_{k}y_k^{m-2}(y_1,\dots, y_{k-1}, y_{k}, x_1,\dots, x_{k-1}) +  y_{k}^{m-1}(x_1,\dots, x_{k-1}).
\end{multline*}
The $\rho$-weight of $T_1$ is
{\small
$$\left(k(m-1)+(2k-1)\binom{m-1}{2}\right)(\lambda_k+\nu_k) + (m-1)(\lambda_1+\nu_1+ \cdots+\lambda_{k-1}+\nu_{k-1}).$$ 
}
Note that $T_1$ misses only the weighted degrees $$k(m-3), k(m-5), \dots,-k(m-5),-k(m-3).$$  For each $s=1, \dots, k-1$, define
$$\begin{aligned}
T_2(s) &: = \{x_{k}^{m-3}y_{k} (x_{k-s}x_{s}), x_{k}^{m-4}y_{k}^2 (x_{k-s}x_{s}), \dots, y_{k}^{m-2}(x_{k-s}x_{s})\}\\
T'_2(s) &:= \{y_{k}^{m-3}x_{k} (y_{k-s}y_{s}), y_{k}^{m-4}x_{k}^2 (y_{k-s}y_{s}), \dots, x_{k}^{m-2}(y_{k-s}y_{s})\}
\end{aligned}$$
For each $s$, the sets $T_1 \cup T_2(s)$ and $T_1 \cup T'_2(s)$ are $\chi$-bases. Using the relation $\sum_{i=1}^{k}(\lambda_i +\nu_i)=0$, one sees at once that the sum 
of the $\rho$-weights of such bases, as
$s$ ranges from $1$ to $k-1$, is a {\em positive multiple} of $(\lambda_k+\nu_k)$.

We now write down bases minimizing the occurrences of $x_k$ and $y_k$.  
We handle the case when $k$ is even and odd separately.

\subsubsection*{Case of even $k$:} If $k=2\ell$, we define the following set of monomials where the weighted degrees range from  $k(m-1)-1$ to $m$:
\begin{equation*}
S_1 := \left\{
\begin{array}{l}   \left. 
\begin{aligned}
&x_{k}^{m-1}y_1, && x_{k}^{m-2}x_{k-1}y_1,& &\dots,&& x_{k-1}^{m-1}y_1, \\
&x_{k-1}^{m-1}y_2, && x_{k-1}^{m-2}x_{k-2}y_2,& &\dots,&& x_{k-2}^{m-1}y_2, \\
&& \vdots \\
&x_{\ell+2}^{m-1}y_{\ell-1}&& x_{\ell+2}^{m-2}x_{\ell+1}y_{\ell-1},& &\dots,&& x_{\ell+1}^{m-1}y_{\ell-1}
\end{aligned}\right\} \begin{array}{l} \text{$m$ terms in } \\ \text{each of the} \\ \text{($\ell-1$) rows} \end{array}
 \\ 
 \\
 \left.\begin{aligned}
&x_{\ell+1}^{m-1}y_{\ell},&& x_{\ell+1}^{m-2}x_{\ell}y_{\ell}, &&\dots, &&x_{\ell+1}^{2}x_{\ell}^{m-3}y_{\ell}, \\
&x_{\ell}^{m-1}y_{\ell-1},&& x_{\ell}^{m-2}x_{\ell-1}y_{\ell-1}, &&\dots,&& x_{\ell}^{2}x_{\ell-1}^{m-3}y_{\ell-1}, \\
&&\vdots  \\
&x_{2}^{m-1}y_1,&& x_{2}^{m-2}x_1y_1, &&\dots, &&x_{2}^{2}x_{1}^{m-3}y_1
\end{aligned}\right\} \begin{array}{l} \text{$(m-2)$ terms}\\ \text{in each of the} \\ \text{$\ell$ rows} \end{array}
\end{array}
\right.
\end{equation*}
Let $\iota$ be the involution of the set $\{  x_i, y_i \}_{i=1}^k$ exchanging $x_i$ and $y_i$.
In the set $S_1 \cup \iota(S_1)$, the variables $x_k$ and $y_k$ occur $(m^2-m)-\binom{m}{2}$ times,  
$x_{\ell+1}$ and $y_{\ell+1}$ occur $(m^2-m)-1$ times, $x_{\ell}$ and $y_{\ell}$ occur $(m^2-m)-m$ times, 
and $x_1$ and $y_1$ occur $m^2-m-(\binom{m}{2}-1)$ times while all of the other variables occur $m^2-m$ times. To complete $S_1 \cup \iota(S_1)$ to a $\chi$-basis, we define, for each $s=1, \ldots, k-1$, the following set of monomials where the weighted degrees
range from $m-1$ to $1-m$:
\begin{equation*}
S_2(s):= \left\{
\begin{aligned}
&x_{\ell+1}y_{\ell}x_1^{m-2} \\
&x_{\ell}y_{\ell}(x_sy_s)^ix_1^{m-2i-2},  &&\quad \text{for $0\leq 2i \leq m-2$}, \\
&x_{\ell}y_{\ell}(x_sy_s)^{i}y_1^{m-2i-2},  &&\quad \text{for $0\leq 2i < m-2$},  \\
&(x_{k}y_{s}y_{k-s})(x_sy_s)^ix_1^{m-2i-3},  &&\quad \text{for $0\leq 2i \leq m-3$}, \\
&(x_{k}y_{s}y_{k-s})(x_sy_s)^iy_1^{m-2i-3},  &&\quad \text{for $0\leq 2i < m-3$}, \\
&y_{\ell+1}x_\ell y_{1}^{m-2}.
\end{aligned}
 \right\}
\end{equation*}
For each $s = 1, \ldots, k-1$, the sets $S_1 \cup \iota(S_1) \cup S_2(s)$ and $S_1 \cup \iota(S_1) \cup \iota(S_2(s))$ are 
$\chi$-bases.
We compute that in the union 
\begin{equation*}
\bigcup_{s=1}^k \left(S_1 \cup \iota(S_1) \cup S_2(s)\right) \cup \left(S_1 \cup \iota(S_1) \cup \iota(S_2(s))\right)
\end{equation*}
of $2(k-1)$ $\chi$-bases the variables $x_k$ and $y_k$ each occurs 
$$2(k-1)(m^2-m)-(k-1)(m^2-2m+2)$$ 
times while all of the other variables occur 
$$2(k-1)(m^2-m)+(m-2)(m-1)$$ times. 

Using the relation $\sum_{i=1}^{k}(\lambda_i +\nu_i)=0$, we conclude that the sum of the $\rho$-weights of all such $\chi$-bases is a {\em negative multiple} of 
$(\lambda_k + \nu_k)$.

\subsubsection*{Case of odd $k$:} If $k=2\ell+1$ is odd, $\chi$-bases of non-positive $\rho$-weight can be constructed analogously to the case when $k$ is even. 
For the reader's convenience, we spell out the details.  
We define of the following set of monomials where the weighted degrees range from  $k(m-1)-1$ to $m-1$:
\begin{equation*}
S_1 := \left\{
\begin{array}{l}   \left. 
\begin{aligned}
&x_{k}^{m-1}y_1, && x_{k}^{m-2}x_{k-1}y_1,& &\dots,&& x_{k-1}^{m-1}y_1, \\
&& \vdots \\
&x_{\ell+3}^{m-1}y_{\ell-1}&& x_{\ell+3}^{m-2}x_{\ell+2}y_{\ell-1},& &\dots,&& x_{\ell+2}^{m-1}y_{\ell-1}
\end{aligned}\right\} \begin{array}{l} \text{$m$ terms in} \\ \text{each of the} \\ \text{($\ell-1$) rows} \end{array}
 \\ 
 \\
 \left.\begin{aligned}
&x_{\ell+2}^{m-1}y_{\ell},&& x_{\ell+2}^{m-2}x_{\ell+1}y_{\ell}, &&\dots, &&x_{\ell+2}^{2}x_{\ell+1}^{m-3}y_{\ell}, \\
&x_{\ell+1}^{m-2}y_{\ell-1},&& x_{\ell+1}^{m-1}x_{\ell}y_{\ell}, &&\dots,&& x_{\ell+1}^{2}x_{\ell}^{m-3}y_{\ell-1}, \\
&&\vdots  \\
&x_{3}^{m-1}y_1,&& x_{3}^{m-2}x_2y_1, &&\dots, &&x_{3}^{2}x_{2}^{m-3}y_1
\end{aligned}\right\} \begin{array}{l} \text{$(m-2)$ terms}  \\ \text{in each of the} \\ \text{$\ell$ rows} \end{array}
 \\ 
 \\
 \begin{aligned}
x_{\ell+2}y_{\ell}x_2^{m-2}, \\
\end{aligned} 
\\
 \\ 
 \begin{aligned}
x_{\ell+1}y_{\ell}x_2^{m-2}, x_{\ell+1}y_{\ell}x_2^{m-3}x_1, \ldots, x_{\ell+1}y_{\ell}x_2 x_1^{m-3}, x_{\ell+1}y_{\ell}x_1^{m-2}
\end{aligned} 

\end{array}
\right.
\end{equation*}
Let $\iota$ be the involution exchanging $x_i$ and $y_i$.
In the set of monomials $S_1 \cup \iota(S_1)$, the variables $x_k$ and $y_k$ occur $\binom{m}{2}$ times, $x_{\ell+1}$ and $y_{\ell+1}$ occur $m^2-m-(m-1)$ times, 
and $x_1$ and $y_1$ occur $m^2-m - \binom{m-1}{2}$ times, while all of the other variables occur $m^2-m$ times.  Finally, for each $s=1, \ldots, k-1$, we define the following set of monomials where the weighted degrees range from $m-2$ to $2-m$:
\begin{equation*}
S_2(s):= \left\{
\begin{aligned}
&x_{\ell+1}y_{\ell+1}(x_sy_s)^ix_1^{m-2-2i}, &&\quad \text{for $0\leq 2i \leq m-2$}, \\
&x_{\ell+1}y_{\ell+1}(x_sy_s)^iy_1^{m-2-2i}, &&\quad \text{for $0\leq 2i < m-2$}, \\
&(x_k y_s y_{k-s})(x_sy_s)^ix_1^{m-3-2i}, &&\quad \text{for $0\leq 2i \leq m-3$}, \\
&(x_k y_s y_{k-s})(x_sy_s)^iy_1^{m-3-2i}, &&\quad \text{for $0\leq 2i < m-3$} 
\end{aligned}
\right\}
\end{equation*}
For each $s=1,\dots, k-1$, the sets 
$S_1 \cup \iota(S_1)\cup S_2(s)$ and $S_1 \cup \iota(S_1) \cup \iota(S_2(s))$ are $\chi$-bases.
We compute that in the union 
\begin{equation*}
\bigcup_{s=1}^k \left(S_1 \cup \iota(S_1) \cup S_2(s)\right) \cup \left(S_1 \cup \iota(S_1) \cup \iota(S_2(s))\right)
\end{equation*}
of $2(k-1)$ $\chi$-bases the variables $x_k$ and $y_k$ each occurs 
$$2(k-1)\binom{m}{2}+2(k-1)(m-2)$$ 
times while all of the other variables occur 
$$2(k-1)(m^2-m)+(m-2)(m-1)$$ times. 

Using the relation $\sum_{i=1}^{k}(\lambda_i +\nu_i)=0$, we conclude that
the total $\rho$-weight of these 
$\chi$-bases is a {\em negative multiple} of $(\lambda_k + \nu_k)$ and we're done.
\end{proof}

\section{Non-semistability results}
\label{S:bielliptic}

The generic semistability results of Theorem \ref{T:main-canonical} and \cite[Theorem 1.2]{AFS-odd-stability} 
raise a natural question of whether Hilbert points of 
smooth canonically embedded curves can at all be non-semistable. An indirect way to see that the answer is affirmative is as follows. 
Denote by $\overline{H}_{g,1}^{\, m}$ the closure of the locus of $m^{th}$ Hilbert points of smooth canonical curves. Next, it is proved in \cite[Section 5]{hassett-hyeon_flip} that an application of
Grothendieck-Riemann-Roch formula allows to write 
the polarization on the GIT quotient $\overline{H}_{g,1}^{\, m}\gitq \SL(g)$ as a linear combination
\begin{multline}\label{E:polarization}
(m(m-1)(4g+2)-(m-1)(g-1)+1)\lambda-\frac{gm(m-1)}{2}\delta \\ \sim \left[8+\frac{4}{g}-\frac{2(g-1)}{gm}+\frac{2}{gm(m-1)}\right]\lambda-\delta
\end{multline}
of a tautological divisor $\lambda$ (the first Chern class of the Hodge bundle) and the boundary divisor $\delta$ (at least on the locus parameterizing curves with mild singularities). 
By generalizing the proof of \cite[Proposition 4.3]{CH}, 
it is not too difficult to see that if $B\ra \Mg{g}$ is a family of stable curves whose general fiber is canonically embedded and the slope $(\delta\cdot B)/(\lambda\cdot B)$
is greater than $(8+\frac{4}{g})-\frac{2(g-1)}{gm}+\frac{2}{gm(m-1)}$, 
then every curve in $B$ (with a well-defined $m^{th}$ Hilbert point) 
must have a non-semistable $m^{th}$ Hilbert point.

Two observations now lead to a candidate for a non-semistable canonically embedded curve. The first is that
$(8+\frac{4}{g})-\frac{2(g-1)}{gm}+\frac{2}{gm(m-1)}\leq 8$ for $g\geq 2m+1+1/(m-1)$. The second is that there are families 
of bielliptic curves of slope $8$ (such can be constructed by taking a double cover of a trivial family of elliptic curves). 
In the following result, we establish that bielliptic curves indeed become non-semistable 
for small values of $m$, and show that generic bielliptic curves are semistable for $m$ large enough.

\begin{theorem}
A smooth bielliptic curve of genus $g$ has non-semistable
$m^{th}$ Hilbert point for all $m \leq (g-3)/2$. A general bielliptic curve of odd genus $g=2k+1$ has 
semistable $m^{th}$ Hilbert point for $m\geq (g-1)/2$.
\end{theorem}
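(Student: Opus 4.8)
I plan to treat the two assertions separately, in both cases exploiting the fact that a canonically embedded bielliptic curve $C$ of genus $g$, with bielliptic involution $\sigma$ and quotient $\pi\co C\to E:=C/\sigma$, lies on the cone $X\subset\PP^{g-1}$ over the elliptic normal curve $E\subset\PP^{g-2}$ of degree $g-1$. Concretely, if $L$ is the degree $g-1$ line bundle on $E$ with $\pi^{*}L\cong\omega_{C}$, then under $\sigma$ one has $\HH^{0}(C,\omega_{C})=\HH^{0}(E,L)\oplus\CC z$, where the $(g-1)$-dimensional invariant summand $\HH^{0}(E,L)$ has a basis $w_{1},\dots,w_{g-1}$ of coordinates on the ambient $\PP^{g-2}\subset\PP^{g-1}$ containing $E$, and the anti-invariant section $z$ is the coordinate of the cone vertex. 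Since $C$ is non-hyperelliptic of genus $\ge 3$ and $E\subset\PP^{g-2}$ is an elliptic normal curve of degree $\ge 3$, both embeddings are projectively normal, and for $m\ge 2$ this yields the $\sigma$-eigenspace decomposition
$$\HH^{0}(C,\omega_{C}^{m})=\HH^{0}(E,L^{m})\ \oplus\ z\cdot\HH^{0}(E,L^{m-1}),$$
in which the first summand is spanned by the degree-$m$ monomials in $w_{1},\dots,w_{g-1}$ and the second by $z$ times the degree-$(m-1)$ monomials; here one uses the quadratic relation $z^{2}=Q(w_{1},\dots,w_{g-1})$ expressing the $\sigma$-invariant section $z^{2}$ of $\omega_{C}^{2}$ inside $\HH^{0}(E,L^{2})$. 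This is the bielliptic counterpart of the scroll picture of Proposition~\ref{P:A-curve}.

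For the non-semistability statement I would test $C$ against the one-parameter subgroup $\rho\co\gm\to\SL(g)$ acting with weight $g-1$ on $z$ and weight $-1$ on each $w_{i}$, whose weights sum to $0$. A degree-$m$ monomial of $z$-degree $p$ then has $\rho$-weight $p(g-1)-(m-p)=pg-m$, independently of which $w_{i}$ occur, and this is strictly increasing in $p$; moreover any monomial of $z$-degree $\ge 2$ can be rewritten, via $z^{2}=Q(w)$, with strictly smaller $z$-degree and hence strictly smaller weight. Consequently the monomial basis of $\HH^{0}(C,\omega_{C}^{m})$ of minimal $\rho$-weight is forced to consist of $m(g-1)$ monomials of $z$-degree $0$ spanning $\HH^{0}(E,L^{m})$, each of weight $-m$, together with $(m-1)(g-1)$ monomials of $z$-degree $1$ spanning $z\cdot\HH^{0}(E,L^{m-1})$, each of weight $g-m$; its total $\rho$-weight is
$$-m^{2}(g-1)+(m-1)(g-1)(g-m)=(g-1)\big((m-1)g-m(2m-1)\big).$$
This is strictly positive as soon as $(m-1)g>m(2m-1)$, and in particular whenever $g\ge 2m+3$, i.e.\ $m\le(g-3)/2$. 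By the Kempf--Morrison description of the Hilbert--Mumford index used throughout this paper (cf.\ \cite[Proposition~2.3]{AFS-odd-stability}), positivity of the minimal monomial-basis weight is precisely the statement that $[C]_{m}$ is non-semistable, which gives the first assertion.

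For the semistability statement in odd genus $g=2k+1$ I would first reduce to one-parameter subgroups adapted to the cone. Since $\Aut(C)\supseteq\langle\sigma\rangle\cong\ZZ_{2}$ is reductive, any destabilizing one-parameter subgroup can be taken to commute with $\sigma$, hence to act diagonally block by block on $\HH^{0}(E,L)\oplus\CC z$, say with integer weights $\mu_{1},\dots,\mu_{g-1}$ on the $w_{i}$ and weight $\nu$ on $z$, with $\sum_{i}\mu_{i}+\nu=0$. It therefore suffices to exhibit, for every such $\rho$, a monomial basis of $\HH^{0}(C,\omega_{C}^{m})$ of non-positive $\rho$-weight. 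The external input is the Hilbert semistability of the elliptic normal curve $E\subset\PP^{g-2}$ (valid for general $E$): after recentering the $\mu_{i}$ to be traceless it furnishes, for every $j$, a monomial basis of $\HH^{0}(E,L^{j})$ whose total $\mu$-weight is at most $-j^{2}\nu$. Substituting these into the decomposition of $\HH^{0}(C,\omega_{C}^{m})$ reduces the problem to a purely combinatorial one: distribute powers of $z$ between the summands $\HH^{0}(E,L^{m})$ and $z\,\HH^{0}(E,L^{m-1})$, trading each $z^{2}$ for the quadric $Q(w)$ through the filtration $\HH^{0}(E,L^{m})\supset Q\cdot\HH^{0}(E,L^{m-2})\supset\cdots$, so that the total weight is non-positive for both signs of $\nu$. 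When $\nu\ge 0$ the naive basis of $z$-degree $\le 1$ already works, since $(m-1)(g-1)-m^{2}-(m-1)^{2}<0$ for $m\ge k$; when $\nu<0$ one must push $z$-powers into the invariant summand as far as the filtration allows, and it is precisely in the bookkeeping of this case that the hypothesis $m\ge k$ is used. Finally, since semistability is an open condition, the $m$-th Hilbert point of a general bielliptic curve of genus $2k+1$ is semistable for all $m\ge(g-1)/2$.

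The geometric setup and the non-semistability weight computation are essentially formal. The hard part will be the last step of the semistability argument: producing, uniformly in the block-diagonal $\rho$ and for both signs of $\nu$, an explicit monomial basis of non-positive weight, and verifying that this can be done exactly when $m\ge k$ --- the combinatorial mirror of the non-semistability bound established above. A secondary point to settle is the precise statement and reference for the Hilbert semistability of elliptic normal curves of degree $g-1$ over the required range of $m$.
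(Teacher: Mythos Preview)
Your treatment of non-semistability matches the paper's: the same cone over $E$, the same one-parameter subgroup with weights $(-1,\dots,-1,g-1)$ on $(w_1,\dots,w_{g-1},z)$, and the same weight computation (your $(g-1)\bigl((m-1)g-m(2m-1)\bigr)$ equals the paper's $(g-1)\bigl((g+1)m-2m^2-g\bigr)$).

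For semistability the paper does something entirely different and much shorter. It does not analyze a smooth bielliptic curve at all. It quotes \cite[Theorem~4.12]{AFS-odd-stability}, which shows that the canonically embedded \emph{rosary} of odd genus $g=2k+1$ has semistable $m^{th}$ Hilbert point precisely when $m\ge k$, and then proves a short lemma (Lemma~\ref{L:rosary}) that the rosary deforms flatly to a smooth bielliptic curve: one writes the rosary as a quadric section of the cone over a cycle of $g-1$ lines in $\PP^{g-2}$ and smooths that cycle to an elliptic normal curve. Openness of semistability then gives the result for a general bielliptic curve. No reduction to $\sigma$-equivariant one-parameter subgroups, no finite Hilbert stability of elliptic normal curves, and no $\nu<0$ combinatorics are needed.

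Your direct plan is not unreasonable, but two points deserve flagging. First, the reduction ``any destabilizing one-parameter subgroup can be taken to commute with $\sigma$'' is not the multiplicity-free Kempf--Morrison criterion of \cite[Proposition~2.3]{AFS-odd-stability}; the $\ZZ_2$-representation on $\HH^0(C,\omega_C)$ has a $(g-1)$-dimensional isotypic piece. What you need here is Kempf's theorem on optimal destabilizing one-parameter subgroups (uniqueness of the associated parabolic, and conjugating the reductive stabilizer into a Levi), which should be cited separately. Second, the $\nu<0$ step is the crux and is not just bookkeeping: pushing $z$-powers through $z^{2}=Q$ forces you to produce monomial bases adapted to the filtration $\HH^0(E,L^{m})\supset Q\cdot\HH^0(E,L^{m-2})\supset\cdots$, whereas Hilbert semistability of $E$ only controls the full spaces $\HH^0(E,L^{j})$, not their successive quotients. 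Already when all $\mu_i$ are equal and $\nu<0$, your naive $z$-degree $\le 1$ basis has strictly positive weight at $m=k$, so a genuinely different construction is required, and it is not clear this can be carried out uniformly in $\rho$. The paper's deformation-from-the-rosary argument sidesteps all of this.
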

\begin{proof}
Let $C$ be a bielliptic canonical curve. Then $C$ is a quadric section of a projective cone over an elliptic curve $E\subset \PP^{g-2}$ 
embedded by a complete linear system of degree $g-1$. Choose projective coordinates $[x_0:\ldots: x_{g-1}]$.
Suppose that the vertex of the cone has coordinates $[0:0:\ldots:0:1]$.
Let $\rho$ be the one-parameter subgroup of $\SL(g)$ acting with weights $(-1,-1,\dots, -1, g-1)$.
There are 
\[
s_m:=h^0(\PP^{g-2}, \O_{\PP^{g-2}}(m))-h^0(E,\O_E(m))=\binom{g-2+m}{m}-m(g-1)
\] degree $m$ hypersurfaces containing $E$.
Thus $$\dim \HH^0(C,\I_C(m)) \cap (x_0,x_2,\dots,x_{g-2})^m=s_m$$ 
and so there are at most 
$$h^0(C,\O_C(m))-s_m=m(g-1)$$ elements in $\HH^0(C,\O_C(m))$ of $\rho$-weight $(-m)$. The remaining $(m-1)(g-1)$ elements in $\HH^0(C,\O_C(m))$
have $\rho$-weight at least $g-m$. Thus the $\rho$-weight of any monomial basis of $\HH^0(C,\O_C(m))$ is at least 
\begin{equation}\label{E:bielliptic-weight}
(m-1)(g-1)(g-m)-m(m(g-1))
=(g-1)((g+1)m-2m^2-g).
\end{equation}
If $m\leq (g-3)/2$, then \eqref{E:bielliptic-weight} is positive, and so $C$ has a non-semistable $m^{th}$ Hilbert point.

To prove the generic semistability of bielliptic curves in the range $m\geq (g-1)/2$, we recall \cite[Theorem 4.12]{AFS-odd-stability}
which shows that the odd genus $g$ canonically embedded rosary has a semistable $m^{th}$ Hilbert point if and only if $g\leq 2m+1$.
It remains to observe that the canonically embedded rosary deforms to a canonically embedded smooth bielliptic curve in the Hilbert scheme
of canonically embedded curves. This is accomplished in Lemma \ref{L:rosary} below. 
\end{proof}

\begin{lemma}
\label{L:rosary}
The canonically embedded rosary 
deforms flatly to a canonically embedded bielliptic curve.
\end{lemma}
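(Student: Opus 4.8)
The plan is to realize the canonically embedded rosary $R$ and a general smooth bielliptic curve as the special and generic members of one flat family in $\PP^{g-1}$, obtained by degenerating the elliptic normal curve that appears in the cone description of the bielliptic canonical model used in the proof of the theorem above. First I would record the two facts I need. On the one hand, as recalled in that proof, a smooth canonically embedded bielliptic curve of genus $g$ is a quadric section $X\cap Q$ of the cone $X=\Cone(E)$ over an elliptic normal curve $E\subset\PP^{g-2}$ of degree $g-1$, with $Q$ missing the vertex; conversely, any such quadric section is, by adjunction on the resolution of $X$, a smooth canonically embedded curve of genus $g$ carrying a base-point-free $g^{1}_{2}$, hence a smooth bielliptic curve. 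On the other hand, I would check that the canonically embedded rosary $R$ of odd genus $g=2k+1$ from \cite[Section~4]{AFS-odd-stability} --- the cycle of $g-1$ conics meeting in $g-1$ consecutive tacnodes --- is of the same shape: $R=X_0\cap Q_0$, where $X_0=\Cone(E_0)$ is the cone over the maximally degenerate elliptic normal curve $E_0\subset\PP^{g-2}$ (a cycle of $g-1$ lines spanning $\PP^{g-2}$) and $Q_0$ is a quadric missing the vertex which is tangent to each of the $g-1$ rulings of $X_0$ along which consecutive $2$-planes of $X_0$ meet; it is precisely this tangency that promotes the would-be nodes of $X_0\cap Q_0$ to tacnodes.

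With these descriptions in place, I would smooth the base. Choose a flat family $\mathcal{E}\to T$ over a smooth pointed affine curve $(T,0)$ of degree-$(g-1)$ elliptic normal curves in $\PP^{g-2}_T$ with $\mathcal{E}_0=E_0$ and $\mathcal{E}_t$ smooth for $t\neq 0$ --- possible because elliptic normal curves of degree $g-1\geq 3$ are unobstructed and the cycle of lines is smoothable in $\PP^{g-2}$. Form the relative cone $\mathcal{X}=\Cone(\mathcal{E}/T)\subset\PP^{g-1}_T$ with vertex fixed at $[0:\dots:0:1]$, take a relative quadric $\mathcal{Q}\subset\PP^{g-1}_T$ with $\mathcal{Q}_0=Q_0$ and $\mathcal{Q}_t$ general and missing the vertex for $t\neq 0$, and set $\mathcal{C}:=\mathcal{X}\cap\mathcal{Q}$. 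Then $\mathcal{C}_0=R$, while for $t\neq 0$ the fiber $\mathcal{C}_t=\Cone(\mathcal{E}_t)\cap\mathcal{Q}_t$ is a general quadric section, missing the vertex, of a cone over a smooth elliptic normal curve of degree $g-1$, hence a smooth canonically embedded bielliptic curve of genus $g$ by the first paragraph. It remains to check flatness of $\mathcal{C}\to T$: each fiber is purely one-dimensional of degree $2g-2$ and arithmetic genus $g$ --- for $t=0$ this is the count $p_a(R)=2(g-1)-(g-1)+1=g$ together with the absence of embedded points in $X_0\cap Q_0$, which holds because $X_0$ is reduced, Cohen--Macaulay and pure of dimension $2$ while $Q_0$ contains no component of $X_0$ --- so the Hilbert polynomial is constant along $T$ and the family is flat. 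Thus $\mathcal{C}\to T$ is a flat family in the Hilbert scheme of canonically embedded genus-$g$ curves with $\mathcal{C}_0=R$ and generic fiber a smooth canonically embedded bielliptic curve, which is exactly the assertion.

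The step I expect to be the main obstacle is the identification in the first paragraph: verifying that $X_0\cap Q_0$, for a suitable $Q_0$, really is the canonically embedded rosary of \cite{AFS-odd-stability} --- that its singular points are exactly the $g-1$ tacnodes, that it carries the $\gm$-action of the rosary, and that its polarization is the dualizing sheaf. This is essentially a computation in coordinates adapted to the cyclic chain of $2$-planes $X_0$, using the local model $y^2=x^4$ for the intersection of a conic with a tangent line, but it is where the content of the lemma lies; a secondary, routine, Bertini-type point is choosing $Q_0$ and the family $\mathcal{E}$ generally enough that $\mathcal{C}_t$ is smooth for $t\neq 0$. If one wished to avoid the explicit quadric $Q_0$ altogether, one could instead argue deformation-theoretically that the $g-1$ tacnodes of $R$ admit a simultaneous smoothing inside the Hilbert scheme of canonical curves and then match dimensions against the $(2g-2)$-dimensional bielliptic locus; but checking that the resulting smoothing is bielliptic would again be the crux.
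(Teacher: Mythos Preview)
Your proposal is correct and follows essentially the same route as the paper: realize the rosary as a quadric section of the cone over the cycle of $g-1$ lines $E_0\subset\PP^{g-2}$, then smooth $E_0$ to an elliptic normal curve and read off the bielliptic deformation. The paper dispatches what you flag as the main obstacle by simply writing down the quadric: in the basis $(\eta,\omega_0,\dots,\omega_{g-2})$ of $\HH^0(R,\omega_R)$ from \cite{AFS-odd-stability}, the rosary is cut out on $\Cone(E_0)$ by $y^2=x_0x_1+x_1x_2+\cdots+x_{g-2}x_0$, and this same fixed quadric is used for all $t$, so no Bertini argument or variation of $\mathcal{Q}$ is needed.
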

\begin{proof} Let $C$ be 
the rosary of genus $g=2k+1$ introduced by Hassett and Hyeon \cite[Section 8.1]{hassett-hyeon_flip}.
We use the notation of \cite[Section 3.2]{AFS-odd-stability}.

Consider $\PP^{g-2}$ with projective coordinates $[x_0:\ldots:x_{g-2}]$ and define $E\subset\PP^{g-2}$ to be the union of $g-1$ lines $L_i : \{x_{i+1}=\dots=x_{i+g-3}=0\}$,
for $i=0,\dots, g-2$
(we use the convention that $x_{i+g-1}=x_{i}$). Then $E$ is a nodal curve of arithmetic genus $1$. Since $\HH^1(C,\O_C(1))=0$, we can deform $E$ in a flat family 
to a smooth elliptic curve by \cite[p.83]{Kol}. Using the basis $(\eta, \omega_0, \dots, \omega_{g-2})$ 
of $\HH^0(C,\omega_C)$ described in \cite[Lemma 3.6]{AFS-odd-stability},
we observe that the rosary $C$ is cut out by the quadric 
$$y^2=x_0x_1+x_1x_2+\cdots+x_{g-2}x_0$$
on the projective cone over $E$ in $\PP^{g-1}$.  Since $E$ deforms to a smooth elliptic curve, it follows that $C$ deforms to a smooth bielliptic curve.
\end{proof}


\begin{remark}[Trigonal curves of higher Maroni invariant] Theorem \ref{T:main-trigonal} shows that the general trigonal curve with Maroni invariant $0$ has a semistable $m^{th}$ Hilbert point for all $m\geq 2$. 
In joint work of the second author with Jensen, it is shown that every trigonal curve with Maroni invariant $0$ has a semistable $2^{nd}$ Hilbert point and every trigonal
curve with a positive Maroni invariant has a non-semistable $2^{nd}$ Hilbert point \cite{fedorchuk-jensen}. In view of the asymptotic stability of the canonically embedded 
curves \cite{mumford-stability}, this result suggests that {\em every} smooth trigonal curve of Maroni invariant $0$ has a semistable $m^{th}$ Hilbert
point for every $m\geq 2$. One also expects that for a general smooth trigonal curve of positive Maroni invariant 
already the third Hilbert point is semistable. Indeed, Equation \ref{E:polarization} shows that the polarization 
on $\overline{H}_{g,1}^{\, 3}\gitq \SL(g)$ is a multiple of 
\begin{equation}\label{E:slope-3}
\left(\frac{22}{3}+\frac{5}{g}\right)\lambda-\delta.
\end{equation}
On the other hand, the maximal possible slope for a family of generically smooth trigonal curves of genus $g$
 is $36(g+1)/(5g+1)$ by \cite{stankova}. 
We note that $$36(g+1)/(5g+1)\leq \left(\frac{22}{3}+\frac{5}{g}\right)$$ whenever $(g-3)(2g-5)\geq 0$. Thus we expect that the $3^{rd}$ Hilbert 
point of a genus $g\geq 4$ canonically embedded trigonal curve is stable. 
\end{remark}

\bibliographystyle{alpha}
\bibliography{BibHK}

\end{document}